\theoremstyle{plain}
\newtheorem{theorem}{Theorem}[section]
\newtheorem{proposition}[theorem]{Proposition}
\newtheorem{lemma}[theorem]{Lemma}
\newtheorem{corollary}[theorem]{Corollary}
\theoremstyle{definition}
\newtheorem{remark}[theorem]{Remark}
\theoremstyle{remark}
\renewenvironment{thebibliography}[1]{%
\begin{oldthebibliography}{#1}%
\setlength{\baselineskip}{1em}
\linespread{1}
\small
\setlength{\parskip}{0.5ex}%
\setlength{\itemsep}{.5em}%
}%
{%
\end{oldthebibliography}%
}
\newcommand{\eps}{\varepsilon}
\newcommand{\N}{\mathbb{N}}
\newcommand{\R}{\mathbb{R}}
\newcommand{\X}{\mathbf{X}}
\newcommand{\bX}{\mathbf{X}}
\newcommand{\Y}{\mathbf{Y}}
\newcommand{\Z}{\mathbf{Z}}
\newcommand{\bZ}{\mathbf{Z}}
\newcommand{\cB}{\mathcal{B}}
\newcommand{\cE}{\mathcal{E}}
\newcommand{\cF}{\mathcal{F}}
\newcommand{\cM}{\mathcal{M}}
\newcommand{\bH}{\mathbf{H}}
\DeclareMathOperator{\supp}{supp}
\DeclareMathOperator{\id}{Id}
\newcommand{\br}[1]{\langle #1 \rangle}
\numberwithin{equation}{section}
\begin{document}

\title{\vspace{-0cm}
Martingale Inequalities and Deterministic Counterparts
\date{\today}
\author{
  Mathias Beiglb\"ock%
  \thanks{
  Department of Mathematics, University of Vienna, mathias.beiglboeck@univie.ac.at. Research supported by FWF Grants P21209 and P26736.
  }
  \and
  Marcel Nutz%
  \thanks{
  Departments of Statistics and Mathematics, Columbia University, mnutz@columbia.edu. Research supported by NSF Grant DMS-1208985. We are greatly indebted to Josef Teichmann for illuminating discussions about Tchakaloff's theorem which led to its martingale version as stated in the text. We would also like to thank Erhan Bayraktar, the Associate Editor and two anonymous referees for their constructive comments. 
  }
 }
}
\maketitle %\vspace{-1em}

\begin{abstract}
We study martingale inequalities from an analytic point of view and show that a general martingale inequality can be reduced to a pair of deterministic inequalities in a small number of variables. More precisely, the optimal bound in the martingale inequality is determined by a fixed point of a simple nonlinear operator involving a concave envelope. Our results yield an explanation for certain inequalities that arise in mathematical finance in the context of robust hedging.
\end{abstract}

\vspace{.9em}

{\small
\noindent \emph{Keywords} Martingale inequality; Concave envelope; Fixed point; Robust hedging; Tchakaloff's theorem

\noindent \emph{AMS 2010 Subject Classification}
%%%%60H05 %Stochastic integrals
60G42; %Martingales with discrete parameter
49L20 %Dynamic programming method
%91B28 %Finance, portfolios, investment
%%%%60G44; %Martingales with continuous parameter
%%%%91B30  %Risk theory, insurance
}

\section{Introduction}\label{se:intro}

%A martingale is a stochastic process $M=(M_t)_{t\in\N}$ such that $M_t$ equals the conditional expectation $E[M_{t+1}|M_0,\dots,M_t]$.
Martingale inequalities are abundant in many areas of probability theory and analysis; see e.g.\ Burkholder's survey~\cite{Burkholder.91} for an extensive list of literature. We study general inequalities for discrete-time martingales from a bird's eye view and relate them to certain deterministic inequalities. Indeed, we shall see that every martingale inequality can be obtained as a consequence of two deterministic ones, and in fact that martingale inequalities are not very probabilistic in nature.

A simple example of a martingale inequality is Doob's maximal quadratic inequality, stating that the running maximum $M_T^*:=\sup_{0\leq t\leq T} |M_t|$ of any martingale $M$ satisfies
\[
  \|M_T^*\|_2\leq 2 \|M_T\|_2,
\]
where $\|\,\cdot\,\|_2$ is the $L^2$-norm.
We may cast this in the form $E[f(M_T,M_T^*)]\leq0$ for a suitable function $f$; namely, $f(x,y)=y^2-4|x|^2$.
%(We write $E[\,\cdot\,]$ for the expectation with respect to the underlying probability measure.)
The general form of the martingale inequality that we shall consider is
\begin{equation}\label{eq:martIneq1}
  E[f(Z_T)]\leq a,
\end{equation}
where $a$ is a constant and $Z=(Z_t)_{t\in\N}$ is a suitable state process defined as a function of $M$; in the preceding example, $Z=(M,M^*)$. More precisely, let $\X$ be a vector space (in which our martingales are taking values) and let $\Z$ be a set, to be used as the state space. Then the $\Z$-valued process $Z$ is determined by a function $\phi: \Z\times \X \to \Z$ via $Z_{t+1}=\phi(Z_t,M_{t+1}-M_t)$ and some initial value $z_0$. Again in the example, $\phi(x,y,d)=(x+d,y\vee|x+d|)$ updates $M$ by adding the next increment and increases the running maximum if necessary.

Given $f:\Z\to \overline{\R}$ and $\phi$, we may ask if there exists a finite constant $a$ such that~\eqref{eq:martIneq1} holds for all $T\in\N$ and all martingales with prescribed initial value, and what the optimal (minimal) value for $a$ is. A possible answer runs as follows. Consider the operator $A$ which acts on functions $g: \Z\to\overline{\R}$ by pre-composing with $\phi$ and taking the concave envelope at the origin in the variable corresponding to the martingale increment:
\[
  Ag(z)= g(\phi(z,\cdot))^\sharp(0),\quad z\in\Z.
\]
%Note that the second variable corresponds to the martingale increment, so the origin corresponds to the fact that the latter are centered.
If $u$ is a fixed point of $A$ dominating $f$; that is, $Au=u$ and $u\geq f$, then $a=u(z_0)$ is an admissible constant in~\eqref{eq:martIneq1}. Under the natural condition $\phi(z,0)=z$, a simple monotonicity argument shows that $A$ has a \emph{minimal} fixed point $u$ dominating $f$. This fixed point can be obtained from $f$ by iterating $A$ and passing to the limit,
\[
  u=A^\infty f:= \lim_{n\to\infty} A^n f,
\]
and we shall see that $a=u(z_0)$ is the optimal constant in~\eqref{eq:martIneq1}. In this sense, we may say that a martingale inequality can be reduced to the two deterministic inequalities
\[
  u\geq Au\quad\mbox{and} \quad u\geq f.
\]
(Here $u\geq Au$ is actually equivalent to $Au=u$.) In fact, we may note that $u$ defines a stronger martingale inequality altogether. Namely, as $u\geq f$,
\[
  E[u(Z_T)]\leq u(z_0)
\]
is stronger than the original inequality $E[f(Z_T)]\leq u(z_0)$ with optimal constant, and we remark that the inequality $u\geq f$ is strict in most cases of interest. Returning to our example, we can check that the minimal fixed point is given by
\[
  u(x,y)=\begin{cases}
      y^2- 4|x|^2 & \text{if } |x| < y/2,\\
      2y^2-4|x|y & \text{if } |x| \geq y/2,
   \end{cases}
\]
and so the optimal constant corresponding to the initial value $z_0=(x_0,|x_0|)$ is $a=-2|x_0|^2$, while the %
knowledge of $u$ actually yields a further strengthening of Doob's maximal inequality (Corollary~\ref{co:doob}).
There are of course very relevant martingale inequalities which hold only for some specific class of martingales; for instance, nonnegative martingales, martingales with increments bounded by one, etc. Many such inequalities can be fitted within our framework by choosing $\Z$ appropriately and assigning the value $-\infty$ to the function $f$ on a suitable subset (see also Section~\ref{se:subordinate}).

All this has little to do with probability or measure theory; in fact, it seems that the latter is only needed to define the expectations. In order to clearly separate this aspect (and also to spare the reader some measurable selection arguments), we shall develop the theory for simple martingales (i.e., martingales taking finitely many values), so that all expectations are actually finite sums. In most cases of interest, $f$ and $\phi$ (and then also the fixed point~$u$) have some continuity properties and the passage to general martingales can be done a posteriori by approximation. However, we also provide an alternative argument which is more in the spirit of this paper and applies even to functions that are merely measurable, under the  restriction that $\X$ be finite-dimensional. Namely, we devise a martingale version of Tchakaloff's theorem, stating that given a measurable (integrable) function $g:(\R^n)^{T}\to\R$ and an $n$-dimensional martingale $M$, we can find a \emph{simple} martingale $N$ such that
\[
  E[g(N_1,\dots,N_T)]=E[g(M_1,\dots,M_T)],
\]
and moreover the (finite) support of the law of $N$ lies in the support of the law of $M$. Note that we have here an actual equality; no approximation is necessary.

The theory outlined in this paper can be seen as a general formulation of a strategy of proof that was used in several works of D.~L.~Burkholder for martingale inequalities where $Z$ consists of $X$, its running maximum and its square function. 
Namely, he used a class of functions $u$, corresponding roughly to what we call fixed points, to find admissible or sharp constants in various martingale inequalities,
and in fact it seems that he was aware of at least part of the structure presented here; see in particular Theorem~2.1 in~\cite{Burkholder.02} but also~\cite{Burkholder.88, Burkholder.91, Burkholder.97}, among others, as well as the recent monograph and review article of Os{\c{e}}kowski~\cite{Osekowski.12,Osekowski.13}. %Burkholder's works contain a condition (called ``the rather mysterious condition (2.2)'' in~\cite{Burkholder.02}) related specifically to the running maximum and which seemingly prevents a general formulation of the ideas; this condition has no analogue in the present paper.

A different stream of literature about martingale inequalities has emerged in mathematical finance, starting with Hobson~\cite{Hobson.98}. In this context, the process~$X$ takes values in $\R^n$ and represents the discounted prices of $n$ tradable securities, while $f(Z_T)$ is seen as an option maturing at the fixed time horizon~$T$. The problem is to find a minimal constant $a$ and a predictable process~$H$ (i.e., $H_t$ is a function of $X_0,\dots, X_{t-1}$) such that%\vspace{-.3em}
\begin{equation}\label{eq:hedgingIntro}
  a + \sum_{t=1}^T \br{H_t, X_{t}-X_{t-1}} \geq f(Z_T),
\end{equation}
where the inner product $\br{H_t, X_{t}-X_{t-1}}$ is interpreted as the gain or loss that occurs as the price $X_{t-1}$ changes to $X_{t}$ while $H_t$ units of the security are held. Thus, if $a$ is charged as the price of the option, the trading strategy $H$ allows to hedge the risk of $f(Z_T)$ in a robust (model-free) way. We observe that by taking expectations on both sides,~\eqref{eq:hedgingIntro} implies the martingale inequality $E[f(Z_T)]\leq a$.
Along these lines, ``pathwise'' proofs for several martingale inequalities have been obtained. For these and related results in robust finance, see \cite{AcciaioBeiglbockPenknerSchachermayer.12,AcciaioEtAl.12,BeiglbockHenryLaborderePenkner.11,BeiglbockSiorpaes.13,BrownHobsonRogers.01,CoxObloj.11,DolinskySoner.12,OblojYor.06}
among others; more references can be found in the surveys by Hobson~\cite{Hobson.11} and Ob{\l}{\'o}j~\cite{Obloj.04}.
In particular, a result of Bouchard and Nutz~\cite{BouchardNutz.13} implies that any martingale inequality in finite discrete time can be related to an inequality of the type~\eqref{eq:hedgingIntro}. However, the machinery used there (to deal with a more general case) only yields a non-constructive existence result for~$H$ and little insight into the nature of the inequality. We shall see that, in essence, $H$ is determined quite explicitly as the derivative of $u(\phi(z,\cdot))^\sharp$.

The remainder of this article is organized as follows. In Section~\ref{se:concaveEnvelopes} we consider martingale inequalities with a fixed time horizon $T$ and relate the optimal constant to certain concave envelopes by dynamic programming. Section~\ref{se:timehom} focuses on martingale inequalities that do not depend explicitly on the time horizon $T$; this further condition of time-homogeneity leads to the fixed point considerations mentioned above. The connection to mathematical finance is also discussed here. In Section~\ref{se:examples}, we illustrate the theory by two simple examples, Doob's maximal $L^p$-inequality and Burkholder's inequality for differentially subordinate martingales. Section~\ref{se:tchakaloff} concludes with the martingale version of Tchakaloff's theorem.

\section{Martingale Inequalities and Concave Envelopes}\label{se:concaveEnvelopes}

It will be convenient to work with functions taking values in the extended real line $\overline{\R}=[-\infty,\infty]$. The convention
\begin{equation}\label{eq:infinityConvention}
  \infty-\infty = -\infty
\end{equation}
is used throughout; in particular, in the definitions of concavity and integrals.
Let $\X$ be a real\footnote{The general case is no more difficult than $\X=\R$. Moreover, most of what follows applies to the complex case without change.} vector space. Given a function $g: \X \to\overline{\R}$, we define its concave envelope $g^\sharp: \X \to\overline{\R}$ as the smallest concave function dominating~$g$, or
\[
  g^\sharp(x)=\inf \{\psi(x)|\, \psi: \X \to\overline{\R}\mbox{ is concave and }\psi\geq g\},\quad x\in\X.
\]
We shall need to take consecutive envelopes over several variables.
Given an integer $t\geq0$ and $g: \X^{t+2}\to\overline{\R}$, we first introduce the function
\[
  g^{\sharp_t}: \X^{t+1}\to\overline{\R},\quad g^{\sharp_t}(x_0,\dots,x_t):= g(x_0,\dots,x_t,\cdot)^\sharp (x_t);
\]
in other words, we pass to the concave envelope in the ultimate variable and evaluate the resulting function at the penultimate variable. For an integer $T\geq0$, we can then define the composition
\[
  \sharp(T)=\sharp_0\circ \cdots \circ \sharp_{T-1}
\]
which maps functions of $T+1$ variables into functions of one variable.

Our first aim is to identify, for a fixed time horizon $T$, the optimal constant for a martingale inequality defined by $f: \X^{T+1}\to\overline{\R}$ in terms of the consecutive envelope $f^{\sharp(T)}$. Given $x_0\in\X$, we shall denote by $\cM^T(x_0)$ the set of all laws of $\X$-valued simple martingales $M_0,\dots, M_T$ satisfying $M_0=x_0$. Note that any expectation $E[f(M_0,\dots,M_T)]$ on the original probability space of the martingale $M$ can be expressed as the expectation $\mu[f]:=E_\mu[f]:=\int f\,d\mu$ of $f$ under the law $\mu$ of $M$; the latter point of view will be more convenient in the sequel. We emphasize that the integral under $\mu\in\cM^T(x_0)$ is a finite sum and therefore does not require any measurability conditions, and moreover that, according to~\eqref{eq:infinityConvention}, we have $\mu[f]=-\infty$ if $\mu[f^+]=\mu[f^-]=\infty$.

\begin{proposition}\label{pr:dualityPathdep}
  Let $f: \X^{T+1}\to\overline{\R}$. Then
  \begin{equation}\label{eq:dppIneq}
    f^{\sharp(T)}(x_0) = \sup_{\mu\in\cM^T(x_0)} \mu[f], \quad x_0\in\X.
  \end{equation}
\end{proposition}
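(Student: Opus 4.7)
The plan is to prove this by induction on $T$, using dynamic programming to peel off one martingale step at a time. The base case $T=0$ is trivial since $\sharp(0)$ is the empty composition and $\cM^0(x_0)=\{\delta_{x_0}\}$, so both sides equal $f(x_0)$.

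For the inductive step, I would first record the standard representation of the concave envelope on a vector space: for any $g:\X\to\overline{\R}$,
\[
   g^\sharp(x)=\sup\Bigl\{\sum_{i=1}^n\lambda_i g(x_i):n\in\N,\ \lambda_i\ge0,\ \sum\lambda_i=1,\ \sum\lambda_ix_i=x\Bigr\}.
\]
(The right-hand side is concave by Jensen, dominates $g$, and is dominated by any concave majorant of $g$; the convention~\eqref{eq:infinityConvention} handles the degenerate cases.) This identifies $g^\sharp(x)$ with the supremum of $\nu[g]$ over finitely supported probability measures $\nu$ on $\X$ with barycenter $x$, which is exactly the $T=1$ case of~\eqref{eq:dppIneq}.

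Next, for $f:\X^{T+1}\to\overline{\R}$ and fixed $x_0\in\X$, introduce the section $h_{x_0}:\X^T\to\overline{\R}$ defined by $h_{x_0}(y_0,\dots,y_{T-1}):=f(x_0,y_0,\dots,y_{T-1})$. A direct unwinding of the definitions of $\sharp_t$ gives the identity
\[
  f^{\sharp(T)}(x_0) \;=\; \bigl(x_1\mapsto h_{x_0}^{\sharp(T-1)}(x_1)\bigr)^\sharp(x_0),
\]
so the question reduces to a one-step envelope of the function provided by the induction hypothesis applied to $h_{x_0}$.

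The induction itself now splits into two inequalities. For the upper bound, given $\mu\in\cM^T(x_0)$ with coordinate process $(M_0,\dots,M_T)$, I would disintegrate $\mu$ with respect to $M_1$: for each atom $x_1$ of the law of $M_1$, the conditional law $\nu^{x_1}$ of $(M_1,\dots,M_T)$ given $M_1=x_1$ belongs to $\cM^{T-1}(x_1)$. The induction hypothesis then yields $\nu^{x_1}[h_{x_0}]\le h_{x_0}^{\sharp(T-1)}(x_1)$, and since the law of $M_1$ is a finitely supported measure with barycenter $x_0$, averaging and applying the envelope representation above bounds $\mu[f]$ by $f^{\sharp(T)}(x_0)$. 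For the lower bound, I would reverse this construction: given a finite convex combination $x_0=\sum\lambda_i x_1^i$ and, by the induction hypothesis, martingale laws $\nu_i\in\cM^{T-1}(x_1^i)$ with $\nu_i[h_{x_0}]$ approximating $h_{x_0}^{\sharp(T-1)}(x_1^i)$, concatenate them by first jumping from $x_0$ to $x_1^i$ with probability $\lambda_i$ and then running $\nu_i$. This produces an element of $\cM^T(x_0)$ whose expectation of $f$ equals $\sum_i\lambda_i\nu_i[h_{x_0}]$; taking suprema recovers $f^{\sharp(T)}(x_0)$.

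\paragraph{Main obstacle.}
Everything above is structurally routine once the concave-envelope-as-convex-combination lemma is in hand; the genuine care required is bookkeeping around the value $-\infty$ under the convention~\eqref{eq:infinityConvention}, in particular checking that the concatenation step in the lower bound does not produce a spurious $+\infty$ contribution that fails to appear in $\mu[f]$, and that upper-bound inequalities like $\nu^{x_1}[h_{x_0}]\le h_{x_0}^{\sharp(T-1)}(x_1)$ remain meaningful when $\nu^{x_1}[h_{x_0}^+]=\infty$. These are the only subtle points; the rest is induction plus a one-step dynamic programming argument.
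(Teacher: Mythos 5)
Your proposal is correct and follows essentially the same route as the paper: the key lemma identifying $g^\sharp(x)$ with $\sup_{\mu\in\cM(x)}\mu[g]$ is the paper's Lemma~\ref{le:SupIsConcaveEnvelope}, and your induction on $T$ with disintegration/concatenation of martingale laws is exactly the dynamic-programming argument of Lemma~\ref{le:dpp}, merely organized as a step-by-step induction rather than a single product-measure construction with $\eps$-optimal kernels. The only notable difference is that the paper sidesteps the $\infty-\infty$ bookkeeping you flag by first treating $f$ bounded above and then passing to the limit along $f\wedge n$; your direct tracking of the convention~\eqref{eq:infinityConvention} also works, but the truncation is the cleaner way to dispose of it.
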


Or, to state the same in different words: proving that an inequality $E[f(M_0,\dots,M_T)]\leq a$ holds for all martingales $M$ starting at $x_0$ boils down to checking that $f^{\sharp(T)}(x_0)\leq a$, and in fact $f^{\sharp(T)}(x_0)$ is the optimal constant.

As a first step towards the proof, we consider the case $T=1$. Noting that $\cM(x)$ is simply the set of all probability measures $\mu$ on $\X$ having finite support and barycenter $\mu[\id_{\X}]=x$, the following identity is essentially classical (see Kemperman~\cite{Kemperman.68}); we state the details for the sake of completeness.

\begin{lemma}\label{le:SupIsConcaveEnvelope}
  Let $g: \X\to\overline{\R}$. Then
  \[
    \sup_{\mu\in\cM(x)} \mu[g]=g^\sharp(x),\quad x\in\X.
  \]
\end{lemma}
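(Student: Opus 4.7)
The plan is to prove both inequalities separately, with the standard $(\leq)$ argument coming from Jensen's inequality applied to concave majorants of $g$, and the $(\geq)$ direction coming from showing that the right-hand side, viewed as a function of $x$, is itself a concave majorant of $g$ and therefore dominates the minimal such, namely $g^\sharp$.

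For the $(\leq)$ direction, I would fix $\mu\in\cM(x)$, write $\mu=\sum_{i=1}^n \lambda_i\delta_{x_i}$ with $\lambda_i\geq 0$, $\sum\lambda_i=1$ and $\sum\lambda_i x_i = x$, and pick an arbitrary concave function $\psi:\X\to\overline{\R}$ with $\psi\geq g$. Then
\[
  \mu[g]\;\leq\;\mu[\psi]\;=\;\sum_{i=1}^n \lambda_i\psi(x_i)\;\leq\;\psi\!\left(\sum_{i=1}^n \lambda_i x_i\right)\;=\;\psi(x),
\]
where the middle inequality is the discrete form of Jensen's inequality for concave functions, obtained from the definition of concavity by induction on $n$. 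Taking the infimum over admissible $\psi$ yields $\mu[g]\leq g^\sharp(x)$, and the supremum over $\mu\in\cM(x)$ gives the claimed inequality.

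For the $(\geq)$ direction, I would set $h(x):=\sup_{\mu\in\cM(x)}\mu[g]$ and verify that $h$ is a concave function with $h\geq g$, which by the very definition of $g^\sharp$ as the smallest concave majorant of $g$ forces $h\geq g^\sharp$. The majorant property is immediate by plugging in the Dirac measure $\delta_x\in\cM(x)$. For concavity, given $x=\lambda x_1+(1-\lambda)x_2$ with $\lambda\in[0,1]$ and arbitrary $\mu_i\in\cM(x_i)$, the convex combination $\mu:=\lambda\mu_1+(1-\lambda)\mu_2$ still has finite support and barycenter $x$, hence lies in $\cM(x)$, and satisfies $\mu[g]=\lambda\mu_1[g]+(1-\lambda)\mu_2[g]$. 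Taking suprema over $\mu_1,\mu_2$ gives $h(x)\geq \lambda h(x_1)+(1-\lambda)h(x_2)$.

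The main obstacle is that the computation must be carried out in $\overline{\R}$ under the convention $\infty-\infty=-\infty$. The subtleties are (i) ensuring that $\mu[\psi]=\sum\lambda_i\psi(x_i)$ is well defined and that the inequality $\sum\lambda_i\psi(x_i)\leq\psi(\sum\lambda_i x_i)$ survives the convention for extended-real-valued concave $\psi$, and (ii) in the concavity step, that $\mu[g]=\lambda\mu_1[g]+(1-\lambda)\mu_2[g]$ still holds when either $\mu_i[g]=\pm\infty$. Both are handled by splitting into positive and negative parts $g^\pm$ of $g$, applying the classical identities to each sum (which involves only nonnegative quantities) and then invoking the convention to collapse the two sums into a single expression; the concavity inequality for $\psi$ reduces to the two-point case $\psi(\lambda a+(1-\lambda)b)\geq\lambda\psi(a)+(1-\lambda)\psi(b)$, which is built into the definition of concavity under the chosen convention. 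Once these arithmetic points are cleared, the rest of the argument is bookkeeping.
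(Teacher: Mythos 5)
Your proposal is correct and follows essentially the same route as the paper: the upper bound via Jensen's inequality for a concave majorant (the paper uses $g^\sharp$ itself where you use an arbitrary $\psi$ and then take the infimum), and the lower bound by showing that $x\mapsto\sup_{\mu\in\cM(x)}\mu[g]$ is a concave majorant of $g$, using convex combinations $\lambda\mu_1+(1-\lambda)\mu_2\in\cM(\lambda x_1+(1-\lambda)x_2)$. The only cosmetic difference is that the paper passes to explicit $\eps$-optimizers with a truncation $\eps^{-1}\wedge(\cdot)$ to handle the case where the supremum is $+\infty$, whereas you take suprema directly; both are fine under the stated conventions.
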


\begin{proof}
  Let $x\in\X$ and $\mu\in\cM(x)$; then $\mu$ is a convex combination of Dirac measures, $\mu=\sum_{i=1}^n \lambda_i \delta_{x_i}$, with $\sum \lambda_i x_i= x$. In particular,
  \[
    \mu[g] \leq \mu[g^\sharp] = \sum \lambda_i g^\sharp(x_i) \leq g^\sharp(x)
  \]
  as $g^\sharp$ is concave, showing that $\sup_{\mu\in\cM(x)} \mu[g]\leq g^\sharp(x)$. To see the converse inequality, let $x_1,x_2\in\X$ and $\lambda\in (0,1)$. Given $\eps>0$, there are $\mu^\eps_i\in\cM(x_i)$ such that (with $a\wedge b :=\min\{a,b\}$)
  \[
    \mu^\eps_i[g] \geq \eps^{-1} \wedge \sup_{\mu\in\cM(x_i)} \mu[g] -\eps.
  \]
  Using the fact that $\lambda \mu^\eps_1 + (1-\lambda)\mu^\eps_2\in \cM(\lambda x_1 + (1-\lambda)x_2)$, we then have
  \begin{align*}
    \lambda \sup_{\mu\in\cM(x_1)} \mu[g] + (1-\lambda) \sup_{\mu\in\cM(x_2)} \mu[g]
    & \leq \limsup_{\eps\to0} \lambda \mu^\eps_1[g] + (1-\lambda) \mu^\eps_2[g] \\
    & \leq \sup_{\mu\in\cM(\lambda x_1 + (1-\lambda)x_2)} \mu[g],
  \end{align*}
  showing that $x\mapsto \sup_{\mu\in\cM(x)} \mu[g]$ is concave. In view of $\sup_{\mu\in\cM(x)} \mu[g]\geq \delta_x[g]=g(x)$, the definition of $g^\sharp(x)$ now yields $\sup_{\mu\in\cM(x)} \mu[g]\geq g^\sharp(x)$.
\end{proof}

The extension to the case of a general horizon $T$ can be understood as a dynamic programming argument where the martingale laws play the role of the controls in a stochastic control problem. Given $g: \X^{t+2}\to\overline{\R}$, we therefore introduce the (value) function
\[
  \cE_t(g): \X^{t+1}\to\overline{\R},\quad \cE_t(g)(x_0,\dots,x_t):= \sup_{\mu\in\cM(x_t)} \mu[g(x_0,\dots,x_t,\cdot)],
\]
as well as the composition
\[
  \cE^{t}:=\cE_t\circ \cdots \circ \cE_{T-1}
\]
which maps functions of $T+1$ variables into functions of $t+1$ variables.

\begin{lemma}\label{le:dpp}
  Let $f: \X^{T+1}\to\overline{\R}$. Then
  \begin{equation}\label{eq:dppIneq}
    (\cE_0\circ \cdots \circ \cE_{T-1})(f)(x_0) = \sup_{\mu\in\cM^T(x_0)} \mu[f], \quad x_0\in\X.
  \end{equation}
\end{lemma}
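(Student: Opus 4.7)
The plan is to proceed by induction on $T$, using Lemma~\ref{le:SupIsConcaveEnvelope} as the base case and the fact that simple martingales admit a clean one-step conditioning. The base case $T=1$ is exactly Lemma~\ref{le:SupIsConcaveEnvelope} applied to $g=f(x_0,\cdot)$, once one notes that $\cE_0(f)(x_0)=\sup_{\mu\in\cM(x_0)}\mu[f(x_0,\cdot)]$ and $\cM^1(x_0)=\cM(x_0)$. For the inductive step, the key observation is that any $\mu\in\cM^T(x_0)$ decomposes along the filtration at time $T-1$: the pushforward $\nu$ onto the first $T$ coordinates lies in $\cM^{T-1}(x_0)$, and the martingale property means that the conditional law $\kappa_{x_0,\dots,x_{T-1}}$ of $M_T$ given $(M_0,\dots,M_{T-1})=(x_0,\dots,x_{T-1})$ belongs to $\cM(x_{T-1})$ for each $(x_0,\dots,x_{T-1})$ in the finite support of $\nu$.

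For the inequality $\leq$, I would use the tower property in the form
\[
  \mu[f] = \sum_{(x_0,\dots,x_{T-1})\in\supp\nu} \nu(\{(x_0,\dots,x_{T-1})\})\,\kappa_{x_0,\dots,x_{T-1}}[f(x_0,\dots,x_{T-1},\cdot)],
\]
bound the inner integral by $\cE_{T-1}(f)(x_0,\dots,x_{T-1})$, and take the supremum over $\nu\in\cM^{T-1}(x_0)$. The induction hypothesis applied to $\cE_{T-1}(f):\X^T\to\overline{\R}$ then gives $\sup_\nu \nu[\cE_{T-1}(f)] = (\cE_0\circ\cdots\circ\cE_{T-2})(\cE_{T-1}(f))(x_0)$, which is exactly the left-hand side of~\eqref{eq:dppIneq}.

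For the reverse inequality $\geq$, I would glue near-optimizers. Fix $\eps>0$; by the induction hypothesis choose $\nu\in\cM^{T-1}(x_0)$ with
\[
  \nu[\cE_{T-1}(f)]\geq \eps^{-1}\wedge(\cE_0\circ\cdots\circ\cE_{T-1})(f)(x_0)-\eps,
\]
and for each of the finitely many points $(x_0,\dots,x_{T-1})\in\supp\nu$ choose $\kappa_{x_0,\dots,x_{T-1}}\in\cM(x_{T-1})$ such that
\[
  \kappa_{x_0,\dots,x_{T-1}}[f(x_0,\dots,x_{T-1},\cdot)]\geq \eps^{-1}\wedge \cE_{T-1}(f)(x_0,\dots,x_{T-1})-\eps.
\]
Gluing these via $\mu(\cdot):=\sum \nu(\{(x_0,\dots,x_{T-1})\})\,\delta_{(x_0,\dots,x_{T-1})}\otimes \kappa_{x_0,\dots,x_{T-1}}(\cdot)$ yields a simple martingale law in $\cM^T(x_0)$, and the truncation $\eps^{-1}\wedge$ lets me pass to the limit $\eps\to0$ without worrying about $\infty-\infty$ under the convention~\eqref{eq:infinityConvention}, exactly as in the proof of Lemma~\ref{le:SupIsConcaveEnvelope}.

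The main obstacle is really the bookkeeping in the gluing step: one must verify that the glued object is indeed a martingale law (which is immediate from $\kappa_{x_0,\dots,x_{T-1}}\in\cM(x_{T-1})$), that simplicity is preserved (which uses finiteness of $\supp\nu$), and that the two levels of $\eps$-truncation combine correctly to recover the supremum in the limit. Because we restrict to simple martingales, there are no measurable-selection issues in choosing the kernel $(x_0,\dots,x_{T-1})\mapsto\kappa_{x_0,\dots,x_{T-1}}$, which would otherwise be the delicate point.
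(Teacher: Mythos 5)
Your overall strategy---disintegrate a martingale law at the last time step, bound the inner integral by $\cE_{T-1}(f)$ for one direction, and glue $\eps$-optimal kernels for the other---is exactly the paper's dynamic-programming argument; the paper merely performs all $T$ selections in one pass (building the full product $\mu_0\otimes\cdots\otimes\mu_{T-1}$) instead of inducting on $T$, and your base case and your first direction (tower property plus the induction hypothesis applied to $\cE_{T-1}(f)$) are fine as written.

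The gluing direction, however, has a genuine gap: the two levels of $\eps^{-1}\wedge$ truncation do \emph{not} combine correctly when $h:=\cE_{T-1}(f)$ is unbounded above. Your outer selection controls $\nu^\eps[h]$, but your inner selections only yield $\mu^\eps[f]\geq \nu^\eps[\eps^{-1}\wedge h]-\eps$, and $\nu^\eps[\eps^{-1}\wedge h]$ can be far below $\nu^\eps[h]$. Concretely, write $L$ for the left-hand side of \eqref{eq:dppIneq}; nothing in your selection rule prevents $\nu^\eps$ from putting mass $\tfrac12$ on a point $a_\eps$ with $h(a_\eps)=2\eps^{-1}$ and mass $\tfrac12$ on a point $b_\eps$ with $h(b_\eps)=2L-2\eps^{-1}$, so that $\nu^\eps[h]=L$ while $\nu^\eps[\eps^{-1}\wedge h]=L-\tfrac12\eps^{-1}\to-\infty$, making your lower bound on $\mu^\eps[f]$ vacuous. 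The one-level truncation in Lemma~\ref{le:SupIsConcaveEnvelope} does not suffer from this because there is no outer measure chosen relative to the \emph{untruncated} value function. The paper sidesteps the issue entirely: it first proves the identity for $f$ bounded from above, in which case every intermediate value function is bounded above by the same constant, the $\eps$-optimal selections need no truncation, and the estimates chain to give $\cE^0(f)(x_0)\leq \mu^\eps[f]+T\eps$; it then passes to general $f$ via $f\wedge n\uparrow f$, using that both sides of \eqref{eq:dppIneq} are continuous along increasing sequences whose $\{\,\cdot=-\infty\}$ sets are constant (which keeps the convention \eqref{eq:infinityConvention} harmless). You should graft this truncate-then-take-monotone-limits device onto your induction, or else repair the gluing by a case analysis: if $h=+\infty$ at a point of $\supp\nu$ while $\nu[h]>-\infty$, then $L=+\infty$ and you obtain arbitrarily large values by loading the inner kernel at that single atom; otherwise $h$ is real-valued on $\supp\nu$ and the inner selections need no truncation at all.
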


\begin{proof}
  We first suppose that $f$ is bounded from above. To see the inequality ``$\leq$'', let $\eps>0$. For all $0\leq t< T$ and $(x_0,\dots,x_t)\in\X^{t+1}$, let $\mu_t(x_0,\dots,x_t)\in\cM(x_t)$ be such that
  \[
    \mu_t(x_0,\dots,x_t) [\cE^{t+1}(f)(x_0,\dots,x_t,\cdot)] \geq \sup_{\mu\in \cM(x_t)} \mu[\cE^{t+1}(f)(x_0,\dots,x_t,\cdot)] - \eps.
  \]
  We may see $\mu_t$ as a stochastic kernel on $\X^{t+1}$ equipped with the discrete $\sigma$-field. Recalling that we are only using measures with finite support, we may form the product measure $\mu^\eps:= (\mu_0\otimes \cdots\otimes \mu_{T-1})(x_0)$ which is an element of $\cM^T(x_0)$ by Fubini's theorem. We then have
  \[
    (\cE_0\circ \cdots \circ \cE_{T-1})(f)(x_0) \leq \eps T + \mu^\eps[f] \leq \eps T + \sup_{\mu\in\cM^T(x_0)} \mu[f].
  \]
  As $\eps>0$ was arbitrary, this yields the claimed inequality. To see the converse inequality ``$\geq$'', fix $x_0\in\X$ and note that any $\mu\in \cM^T(x_0)$ can be decomposed into the product $\mu=\mu_0\otimes \mu_1\otimes \cdots  \otimes \mu_{T-1}$ of a measure $\mu_0\in\cM(x_0)$ and kernels $\mu_t$ on $\X^t$ such that $\mu_t(x_1,\dots,x_t)\in \cM(x_t)$ for all $x_1,\dots,x_t\in \X$. By the definition of the operators $\cE_t$, we then have
  \[
    (\cE_0\circ \cdots \circ \cE_{T-1})(f)(x_0) \geq (\mu_0\otimes \mu_1\otimes \cdots  \otimes \mu_{T-1})[f]= \mu[f]
  \]
  and the claim follows as $\mu\in\cM^T(x_0)$ was arbitrary.

  Finally, for the case of a general function $f$, we observe that both sides of~\eqref{eq:dppIneq} are continuous along increasing sequences $(f_n)$ of
  $\overline{\R}$-valued functions having the property that $\{f_n=-\infty\}=\{f_{n+1}=-\infty\}$, $n\geq1$.
  % NOTE: For such sequences, the sup is taken over the measures not charging the set $\{f_1=-\infty\}$. We also use here that any finite-valued function is integrable, by the finiteness of the supports. Finally, we also use that $\cE_t$ preserves such sequences.
  Thus, we may apply the above to $f\wedge n$ and pass to the limit as $n\to\infty$.
\end{proof}

\begin{proof}[Proof of Proposition~\ref{pr:dualityPathdep}]
  Since Lemma~\ref{le:SupIsConcaveEnvelope} shows that $\sharp_t=\cE_t$, Proposition~\ref{pr:dualityPathdep} is a direct consequence of Lemma~\ref{le:dpp}.
\end{proof}

\section{Time-Homogeneous Martingale Inequalities}\label{se:timehom}

Let $x_0\in\X$, set $X_0=x_0$ and let $(X_t)_{t=1,2,\dots}$ be the coordinate-mapping process on $\X\times\X\times \cdots$.
Moreover, let $\Z$ be a nonempty set and fix a function $\phi: \Z\times \X \to \Z$. Given $z_0\in\Z$, we define the $\Z$-valued process $Z=(Z_t)_{t=0,1,\dots}$ by
\[
  Z_0=z_0,\quad Z_{t+1}=\phi(Z_t,X_{t+1}-X_t).
\]
We write $\overline{\R}^{\Z}$ for the set of all functions $\Z\to\overline{\R}$, equipped with the pointwise partial order and convergence,
and define the operator $A:\overline{\R}^{\Z}\to\overline{\R}^{\Z}$ via
\[
  Ag(z):= [g\circ \phi(z,\cdot)]^\sharp(0),\quad z\in\Z.
\]
Moreover, we write $A^T$ for the $T$-fold composition $A\circ \cdots\circ A$. Using this notation, Proposition~\ref{pr:dualityPathdep} can be rephrased as follows.

\begin{lemma}\label{le:dualityMarkov}
  Let $f: \Z\to\overline{\R}$ and let $(x_0,z_0)\in \X\times\Z$.  Then
  \[
    A^Tf(z_0) = \sup_{\mu\in\cM^T(x_0)} \mu[f(Z_T)].
  \]
\end{lemma}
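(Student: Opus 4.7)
The plan is to reduce to Proposition~\ref{pr:dualityPathdep} by turning the Markovian problem into a path-dependent one. First I would define a path-functional $\tilde f: \X^{T+1}\to \overline{\R}$ by
\[
  \tilde f(x_0,\dots,x_T) := f(z_T),
\]
where $z_t$ is built recursively by $z_{t+1}=\phi(z_t, x_{t+1}-x_t)$ from the prescribed $z_0$. Since the law of $(X_0,\dots,X_T)$ under any $\mu\in\cM^T(x_0)$ completely determines the (deterministic) image $(Z_0,\dots,Z_T)$, we have $\mu[f(Z_T)] = \mu[\tilde f]$ for every such $\mu$. Applying Proposition~\ref{pr:dualityPathdep} to $\tilde f$ then gives
\[
  \tilde f^{\sharp(T)}(x_0) = \sup_{\mu\in\cM^T(x_0)} \mu[f(Z_T)],
\]
so it remains only to identify $\tilde f^{\sharp(T)}(x_0)$ with $A^T f(z_0)$.

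For this second step I would run a backward induction on the time index, showing that for each $t=0,\dots,T$ the partial envelope $F_t:=\sharp_{t}\circ\cdots\circ\sharp_{T-1}(\tilde f)$ depends on $(x_0,\dots,x_t)$ only through $z_t$, and satisfies
\[
  F_t(x_0,\dots,x_t) = A^{T-t}f(z_t).
\]
The base case $t=T$ is the definition of $\tilde f$. For the inductive step, observe that by the induction hypothesis $F_{t+1}(x_0,\dots,x_{t+1})$ depends on $x_{t+1}$ only through $\phi(z_t,x_{t+1}-x_t)$, so writing $h(d):=A^{T-t-1}f(\phi(z_t,d))$ we have $F_{t+1}(x_0,\dots,x_t,x) = h(x-x_t)$.

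The crucial observation is then the translation invariance of the concave envelope: if $g(x)=h(x-x_t)$, then $g^\sharp(x_t)=h^\sharp(0)$, since concavity is preserved by the translation $x\mapsto x+x_t$ and one can pass between minorants of $g$ at $x_t$ and minorants of $h$ at $0$ by this shift. Applying $\sharp_t$ therefore yields
\[
  F_t(x_0,\dots,x_t) = h^\sharp(0) = [A^{T-t-1}f\circ\phi(z_t,\cdot)]^\sharp(0) = A(A^{T-t-1}f)(z_t) = A^{T-t}f(z_t),
\]
which completes the induction. Evaluating at $t=0$ gives $\tilde f^{\sharp(T)}(x_0)=A^T f(z_0)$, and combined with the previous paragraph this is the claim.

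I expect no real obstacle beyond careful bookkeeping: the translation-invariance of $\sharp$ is elementary, and Proposition~\ref{pr:dualityPathdep} does all of the probabilistic work. The only thing to check with some care is that $\tilde f$ is well-defined (it is, since $Z_T$ is a deterministic function of the path built from the fixed $z_0$) and that Proposition~\ref{pr:dualityPathdep} applies without extra regularity; it does, because it is stated for arbitrary $\overline\R$-valued $\tilde f$.
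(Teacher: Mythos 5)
Your proof is correct and follows exactly the route the paper intends: the paper states this lemma as a mere ``rephrasing'' of Proposition~\ref{pr:dualityPathdep} without writing out the verification, and your backward induction identifying $\sharp_t\circ\cdots\circ\sharp_{T-1}(\tilde f)$ with $A^{T-t}f(z_t)$ via translation invariance of the concave envelope supplies precisely the omitted bookkeeping. No gaps.
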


This lemma may look less general than Proposition~\ref{pr:dualityPathdep}, which allows for a general dependence on the path of $X$, but let us mention that with the choice $\Z=\N\times \X^\N$ we can arrange things so that $Z_t=(t,X_0,X_1,\dots,X_t,0,0,\dots)$.

From now on, we focus on martingale inequalities which hold for any time horizon~$T$. The structural condition
\begin{equation}\label{eq:freeze}
  \phi(z,0)=z,\quad z\in\Z
\end{equation}
seems to be natural in that setting and we make this a \emph{standing assumption}. The operator $A$ then has the following monotonicity properties.

\begin{lemma}\label{le:Amonotone}
  Let $g,g': \Z\to\overline{\R}$. Then
  \begin{enumerate}
    \item $Ag\geq g$;
    \item $g\geq g'$ implies $Ag\geq Ag'$.
  \end{enumerate}
\end{lemma}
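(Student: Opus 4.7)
The plan is to exploit two elementary properties of the concave envelope operation, combined with the standing assumption \eqref{eq:freeze}.

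For part (i), I would note that by definition the concave envelope $h^\sharp$ of any $h:\X\to\overline{\R}$ dominates $h$ pointwise, so in particular $[g\circ\phi(z,\cdot)]^\sharp(0)\geq g(\phi(z,0))$. The standing assumption $\phi(z,0)=z$ then identifies the right-hand side with $g(z)$, giving $Ag(z)\geq g(z)$ for every $z\in\Z$.

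For part (ii), the key fact is monotonicity of the $\sharp$ operator: if $h\geq h'$ on $\X$, then $h^\sharp\geq (h')^\sharp$. This is immediate from the definition as an infimum, since any concave function $\psi\geq h$ also satisfies $\psi\geq h'$, so the admissible class in the infimum defining $(h')^\sharp$ contains the one defining $h^\sharp$. Applying this with $h=g\circ\phi(z,\cdot)$ and $h'=g'\circ\phi(z,\cdot)$ — which satisfy $h\geq h'$ whenever $g\geq g'$ — and then evaluating at $0$ yields $Ag(z)\geq Ag'(z)$.

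Neither step presents a real obstacle; the proof is essentially a direct unwinding of the definitions, and the only ingredient beyond pure definition-chasing is the use of \eqref{eq:freeze} in part (i). One minor subtlety worth flagging is the convention \eqref{eq:infinityConvention}, which ensures that all the concave envelopes and pointwise comparisons remain well-defined when $\overline{\R}$-valued functions attain $\pm\infty$; this plays no substantive role but justifies handling the extended-real setting without case distinctions.
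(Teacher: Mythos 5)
Your proof is correct and follows exactly the paper's argument: part (i) is the concave envelope dominating the function at $d=0$ combined with $\phi(z,0)=z$, and part (ii) is the monotonicity of $\sharp$, which you additionally justify from the infimum definition. No issues.
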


\begin{proof}
  In view of~\eqref{eq:freeze}, we have
  \[
    Ag(z) = g(\phi(z,\cdot))^\sharp(0) \geq g(\phi(z,0))=g(z),\quad z\in\Z.
  \]
  The second property follows from the monotonicity of $\sharp$.
\end{proof}

\begin{theorem}\label{th:minimalFP}
  Let $f:\Z\to\overline{\R}$. Then the limit
  \[
    A^\infty f(z) := \lim_{n\to\infty} A^nf(z),\quad z\in\Z
  \]
  exists in $\overline{\R}$ and the function $A^\infty f\in\overline{\R}^{\Z}$ is characterized as the smallest fixed point of $A$ which dominates $f$.
\end{theorem}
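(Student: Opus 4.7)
My plan is to establish the three assertions in sequence: existence of the pointwise limit, the fact that it is a fixed point of $A$ dominating $f$, and minimality among such fixed points.

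For existence and domination I would use Lemma~\ref{le:Amonotone}(i): since $Ag \geq g$ for any $g$, taking $g = A^n f$ yields $A^{n+1}f \geq A^n f$. Hence the sequence $(A^n f(z))_{n \geq 0}$ is non-decreasing in $\overline{\R}$ for every $z$, so the pointwise limit $A^\infty f(z)$ exists in $\overline{\R}$; iterating the same inequality also gives $A^\infty f \geq f$.

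The heart of the argument is to check $A(A^\infty f) = A^\infty f$. Since $A^{n+1} f(z) = [A^n f \circ \phi(z,\cdot)]^\sharp(0)$, this reduces to the claim that for any pointwise-increasing sequence $h_n \uparrow h$ of functions $\X \to \overline{\R}$, one has $h_n^\sharp(x) \uparrow h^\sharp(x)$ at every $x \in \X$. Monotonicity of $\sharp$ gives $\lim_n h_n^\sharp \leq h^\sharp$, and for the reverse inequality I would invoke the representation from Lemma~\ref{le:SupIsConcaveEnvelope} and exchange two suprema:
\[
  h^\sharp(x) = \sup_{\mu \in \cM(x)} \mu[h] = \sup_{\mu \in \cM(x)} \sup_n \mu[h_n] = \sup_n h_n^\sharp(x).
\]
The middle equality uses $\mu[h_n] \uparrow \mu[h]$, which is just monotone convergence for a finite sum in $\overline{\R}$ (each $\mu \in \cM(x)$ has finite support). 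I expect this interchange to be the main point requiring care: the convention $\infty - \infty = -\infty$ makes $-\infty$ absorbing in sums, but because no coordinate of $h_n$ can decrease in $n$, the partial sums are themselves monotone in $\overline{\R}$ and converge to $\mu[h]$ without pathology. Applying this to $h_n = A^n f \circ \phi(z,\cdot)$ and passing to the limit in $n$ in the identity displayed above then gives $A^\infty f(z) = A(A^\infty f)(z)$.

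Finally, for minimality, suppose $v \in \overline{\R}^{\Z}$ satisfies $Av = v$ and $v \geq f$. By Lemma~\ref{le:Amonotone}(ii), applying $A$ preserves the inequality, so iterating yields $v = A^n v \geq A^n f$ for every $n \geq 0$; letting $n \to \infty$ gives $v \geq A^\infty f$ and completes the characterization.
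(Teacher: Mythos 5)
Your proof is correct and follows the same overall structure as the paper's: monotone convergence of $A^n f$ via Lemma~\ref{le:Amonotone}, a continuity property of $\sharp$ along nondecreasing sequences to get the fixed-point identity, and iteration of $Av=v$ for minimality. The only (minor) divergence is in proving $\lim_n h_n^\sharp = (\lim_n h_n)^\sharp$: the paper argues directly that $\lim_n h_n^\sharp$ is concave (as a pointwise limit of concave functions) and dominates $\lim_n h_n$, whereas you route through the representation of Lemma~\ref{le:SupIsConcaveEnvelope} and an interchange of suprema; both are valid, and your explicit check that $\mu[h_n]\uparrow\mu[h]$ survives the convention $\infty-\infty=-\infty$ for finitely supported $\mu$ is exactly the point that needs care there.
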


\begin{remark}\label{rk:minimalFPprob}
  By Lemma~\ref{le:dualityMarkov}, $A^\infty f(z_0)$ is the optimal \emph{horizon-independent} constant for the martingale inequality determined by $f$, $\phi$ and $z_0$. In fact, Lemma~\ref{le:dualityMarkov} naturally extends to
  \[
    A^\infty f(z_0) = \sup_{\mu\in\cM^\infty(x_0)} \mu[f(Z_\infty)]
  \]
  if we denote by $\cM^\infty(x_0)$ the set of all laws of $\X$-valued simple\footnote{``Simple'' means that the support is a finite subset of $\X^{\N}$.} martingales $(M_t)_{t\in \N}$ satisfying $M_0=x_0$. Note that any such martingale is eventually constant, so that $Z_\infty:=\lim_n Z_n$ is well-defined $\mu$-a.s.\ for all $\mu\in\cM^\infty(x_0)$.
\end{remark}

\begin{proof}[Proof of Theorem~\ref{th:minimalFP}]
  It follows from Lemma~\ref{le:Amonotone} that
  \[
    f\leq Af \leq \cdots\leq A^nf,\quad n\geq1.
  \]
  In particular, the limit $A^\infty f(z) := \lim_{n\to\infty} A^nf(z) \in \overline{\R}$ exists for all $z\in \Z$. Next, let us observe that if $(g_n)_{n\geq1}\subseteq \overline{\R}^{\Z}$ is a nondecreasing sequence, then
  \[
    \lim_n  g_n^\sharp = (\lim_n g_n)^\sharp.
  \]
  Indeed, both limits are increasing and thus well-defined, and the monotonicity of $\sharp$ immediately implies that $\lim_n  g_n^\sharp \leq (\lim g_n)^\sharp$. Conversely, $\lim_n  g_n^\sharp$ is concave as the pointwise limit of concave functions and dominates $\lim g_n$, so that $\lim_n  g_n^\sharp \geq (\lim g_n)^\sharp$. Using this continuity property of $\sharp$, we see that
  \begin{gather*}
    A^\infty f(z) = \lim_n A^{n+1}f(z) = \lim_n [A^nf\circ \phi(z,\cdot)]^\sharp(0)= [\lim_n A^nf\circ \phi(z,\cdot)]^\sharp(0) \\= [A^\infty f\circ \phi(z,\cdot)]^\sharp(0)=AA^\infty f(z)
  \end{gather*}
  for all $z\in\Z$; that is, $A^\infty f$ is a fixed point. If $g\in\overline{\R}^{\Z}$ is another fixed point of $A$ such that $g\geq f$, then the monotonicity of $A$ from Lemma~\ref{le:Amonotone} yields that
  \[
    g = A^n g \geq A^n f,\quad n\geq0
  \]
  and hence $g\geq A^\infty f$ by passing to the limit.
\end{proof}

\begin{remark}\label{rk:Burkholder}
  Let $u: \Z\to\overline{\R}$ be any function such that $f\leq u$ and $Au\leq u$ (hence $Au=u$; cf.\ Lemma~\ref{le:Amonotone}). Then
  \[
    \sup_{\mu\in\cM^\infty(x_0)} \mu[f(Z_\infty)] =A^\infty f(z_0) \leq A^\infty u(z_0) = u(z_0);
  \]
 that is, to prove that the martingale inequality holds with right-hand side~$a$, it suffices to exhibit a fixed point $u$ of $A$ which dominates $f$ and satisfies $u(z_0)\leq a$. As mentioned in the Introduction, this corresponds to a general formulation of the strategy of proof that has been used by Burkholder for several specific martingale inequalities.
 %; see, e.g., Theorem~2.1 in~\cite{Burkholder.02}. In our abstract setup, it becomes evident that one should isolate a set (our $\Z$) where~\eqref{eq:freeze} holds; such a choice helps to avoid unnatural extra conditions like~(2.2) in~\cite{Burkholder.02}.
 For the above conclusion, it is not necessary to establish that $u$ is the minimal fixed point; however, this property guarantees that $u(z_0)$ is the optimal right-hand side.
\end{remark}

To find an explicit formula for $A^\infty f$ (or any other fixed point), it is often useful to study properties of $f$ that are preserved by $A$. We give a simple example to illustrate this point (see also Section~\ref{se:DoobIneq}).

\begin{remark}\label{rk:scaling}
  Suppose that $\Z$ is a cone and that $\phi$ is positively homogeneous of degree one. If $f\in\overline{\R}^{\Z}$ is positively homogeneous of degree $p>0$, then so are $Af$ and $A^\infty f$.
  Indeed, let $\lambda\geq 0$; then
%  \begin{align*}
%    Af(\lambda z)
%    &= f(\phi(\lambda z,\cdot))^\sharp(0) \\
%    &= \inf \{\psi(0)|\, \psi\geq f(\phi(\lambda z,\cdot)) \} \\
%    &= \inf \{\psi(0)|\, \psi(\lambda \cdot )\geq f(\phi(\lambda z,\lambda \cdot)) \} \\
%    &= \inf \{\psi(0)|\, \psi(\lambda \cdot )\geq \lambda^p f(\phi(z, \cdot)) \} \\
%    &= \inf \{\psi(0)|\, \psi \geq \lambda^p f(\phi(z, \cdot)) \} \\
%    &= \inf \{\lambda^p\psi(0)|\, \psi \geq f(\phi(z, \cdot)) \} \\
%    &= \lambda^p Af(z),
%  \end{align*}
    \begin{multline*}
    Af(\lambda z)
    = f(\phi(\lambda z,\cdot))^\sharp(0)
%    = \inf \{\psi(0)|\, \psi\geq f(\phi(\lambda z,\cdot)) \} \\
    = \inf \{\psi(0)|\, \psi(\lambda \cdot )\geq f(\phi(\lambda z,\lambda \cdot)) \} \\
%    = \inf \{\psi(0)|\, \psi(\lambda \cdot )\geq \lambda^p f(\phi(z, \cdot)) \} \\
    = \inf \{\psi(0)|\, \psi \geq \lambda^p f(\phi(z, \cdot)) \}
    = \inf \{\lambda^p\psi(0)|\, \psi \geq f(\phi(z, \cdot)) \}
    = \lambda^p Af(z),
  \end{multline*}
  where the infima are taken over all concave functions $\psi: \X \to\overline{\R}$. The homogeneity of $A^\infty f$ follows.
\end{remark}

Next, we would like to explain a connection to certain inequalities which have arisen in mathematical finance---from our abstract point of view, we shall see that the latter are simply manifestations of the concavity that is imposed by $A$.
For the purpose of the subsequent discussion, we assume that we are given a dual pair $\X,\X'$ with a separating pairing $\br{\cdot,\cdot}$. Given a concave function $h: \X\to\overline{\R}$, the supergradient $\partial h(d_0)$ at $d_0\in\X$ is defined as the set of all $\xi\in\X'$ such that $h(d_0)+\br{\xi,d-d_0}\geq h(d)$ for all $d\in\X$, and $h$ is called superdifferentiable at $d_0$ if this set is nonempty.

\begin{lemma}\label{le:FPcharacterizations}
  Let $g: \Z\to\overline{\R}$. Each of the following conditions implies the subsequent one:
  \begin{enumerate}
    \item For all $z\in\Z$ there exists $\xi(z)\in\X'$ such that
     \begin{equation}\label{eq:gradientIneq}
       g(\phi(z,d))\leq g(z)+ \br{\xi(z),d},\quad d\in\X.
     \end{equation}
%    \item $Ag\leq g$.
    \item $Ag=g$.
    \item For all $z\in\Z$ and all $\xi(z)\in \partial g(\phi(z,\cdot))^\sharp(0)$,
     \[
       g(\phi(z,d))\leq g(z)+ \br{\xi(z),d},\quad d\in\X.
     \]
  \end{enumerate}
  If the concave function $g(\phi(z,\cdot))^\sharp$ is superdifferentiable at $d=0$, these conditions are equivalent. In particular, the conditions are equivalent if $\X$ is finite-dimensional and $g(\phi(z,\cdot))^\sharp$ is finite-valued.
\end{lemma}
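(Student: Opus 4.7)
The plan is to prove the chain of implications (i)$\Rightarrow$(ii)$\Rightarrow$(iii) directly from the definitions of concave envelope and supergradient, then argue that under superdifferentiability (iii)$\Rightarrow$(i) simply by choosing $\xi(z)$ to be an element of $\partial g(\phi(z,\cdot))^\sharp(0)$.

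For (i)$\Rightarrow$(ii), the observation is that the right-hand side of~\eqref{eq:gradientIneq} is an affine (hence concave) function of $d$ that dominates $g(\phi(z,\cdot))$. By the definition of the concave envelope as the smallest concave majorant, this forces $g(\phi(z,\cdot))^\sharp(d) \leq g(z)+\br{\xi(z),d}$ for every $d\in\X$. Evaluating at $d=0$ yields $Ag(z)\leq g(z)$, and since $Ag\geq g$ by Lemma~\ref{le:Amonotone}(i), equality holds. (The conventions involving $\pm\infty$ can be checked separately: if $g(z)=-\infty$, (i) forces $g(\phi(z,\cdot))\equiv-\infty$ and hence $Ag(z)=-\infty=g(z)$; the case $g(z)=+\infty$ is immediate from $Ag\geq g$.)

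For (ii)$\Rightarrow$(iii), suppose $Ag(z)=g(z)$, i.e., $g(\phi(z,\cdot))^\sharp(0)=g(z)$, and let $\xi(z)\in\partial g(\phi(z,\cdot))^\sharp(0)$. By definition of the supergradient,
\[
  g(\phi(z,\cdot))^\sharp(d)\leq g(\phi(z,\cdot))^\sharp(0)+\br{\xi(z),d}=g(z)+\br{\xi(z),d},\q d\in\X.
\]
Combined with the pointwise inequality $g(\phi(z,d))\leq g(\phi(z,\cdot))^\sharp(d)$, this gives exactly the inequality in (iii). Note that this argument is vacuous unless the supergradient set is nonempty, which is precisely why (iii) is a weaker assertion than (i) in general.

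Under superdifferentiability of $g(\phi(z,\cdot))^\sharp$ at $0$ for every $z\in\Z$, the selection required in (iii) actually exists, and then (iii) with this specific choice of $\xi(z)$ immediately yields (i), closing the circle. For the last sentence of the lemma I would invoke the standard fact that a finite-valued concave function on a finite-dimensional space is continuous and superdifferentiable at every interior point of its effective domain, which here is all of $\X$. I do not expect any real obstacle; the main care needed is simply to verify that the extended-real conventions from~\eqref{eq:infinityConvention} do not break the elementary manipulations, which is routine.
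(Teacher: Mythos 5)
Your proposal is correct and follows essentially the same route as the paper: (i)$\Rightarrow$(ii) by taking concave envelopes of the affine majorant and invoking Lemma~\ref{le:Amonotone}, (ii)$\Rightarrow$(iii) from the supergradient inequality together with $g(\phi(z,d))\leq g(\phi(z,\cdot))^\sharp(d)$, and the converse (iii)$\Rightarrow$(i) once the supergradient is nonempty. Your extra remarks on the $\pm\infty$ conventions and on superdifferentiability of finite-valued concave functions in finite dimensions are consistent with what the paper leaves implicit.
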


\begin{proof}
  Let (i) hold. Taking concave envelopes on both sides of~\eqref{eq:gradientIneq}, we see that
  \[
    Ag(z)=g(\phi(z,\cdot))^\sharp (0)\leq g(z)+ \br{\xi(z),0} = g(z),
  \]
  which implies~(ii) by Lemma~\ref{le:Amonotone}.
  Let $\xi(z)\in \partial g(\phi(z,\cdot))^\sharp(0)$; that is,
  \[
    g(\phi(z,\cdot))^\sharp(d) \leq g(\phi(z,\cdot))^\sharp(0) + \br{\xi(z),d} \equiv Ag(z) + \br{\xi(z),d},\quad d\in\X.
  \]
  Then~(ii) and the fact that $g(\phi(z,d))\leq g(\phi(z,\cdot))^\sharp(d)$ yield~(iii).
  Finally, if $\partial g(\phi(z,\cdot))^\sharp(0)\neq\emptyset$ for all $z\in\Z$, it is evident that~(iii) implies~(i).
\end{proof}

We mention that Lemma~\ref{le:FPcharacterizations} can serve as a tool to verify that $g$ is a fixed point: in examples, it is sometimes easier to verify a relation like~\eqref{eq:gradientIneq} which does not involve the concave envelope (e.g.\ \cite{BeiglbockSiorpaes.13}).

\begin{remark}\label{rk:mathFin}
  In the context of mathematical finance, the $\R^n$-valued process $X$ represents the discounted prices of $n$ tradable securities, while $f(Z_T)$ is seen as an option maturing at time~$T$. Inequality~\eqref{eq:gradientIneq} with $g=u=A^\infty f$ expresses that the trading strategy $H_t:=\xi(Z_{t-1})$ yields a superhedge for the seller of the option if $u(z_0)$ is charged as its price:
  \begin{equation}\label{eq:hedging}
    u(z_0) + \sum_{t=1}^T \br{H_t, X_{t}-X_{t-1}} \geq u(Z_T) \geq f(Z_T),
  \end{equation}
  where the left-hand side is the balance obtained from the amount $u(z_0)$ and the gains/losses from trading according to $H$. A similar observation applies if the time horizon $T$ is seen as fixed (which is more natural in finance); namely, $H_t \in \partial [A^{T-t}(\phi(Z_{t-1},\cdot))^\sharp](0)$ yields a process such that
  \begin{equation}\label{eq:hedging2}
    A^Tf(z_0) + \sum_{t=1}^T \br{H_t, X_{t}-X_{t-1}} \geq f(Z_T).
  \end{equation}
  In particular, this gives a simple and constructive proof for the result of~\cite{BouchardNutz.13} mentioned in the Introduction (note that an element of the supergradient can be chosen simply by taking a directional derivative).

  By its definition, $A^T(z_0)$ is the minimal constant allowing for an inequality of the form~\eqref{eq:hedging2} to hold almost-surely under all martingale laws and hence in all viable models, so that $A^T(z_0)$ is called the robust (or model-independent) superhedging price.
  To enlarge a bit further on the financial aspect, suppose that $\Z\subseteq \X\times\Y$ for some set $\Y$ and that $\phi(x,y,d)=\varphi(x+d,y)$ for some function $\varphi: \X\times \Y\to \Z$, where we now write $(x,y)$ instead of $z$ (see also Section~\ref{se:DoobIneq} below). If $u=A^\infty f$, then $u(\cdot,y)$ is concave because $u(x,y)$ is the concave envelope of $u(\varphi(\cdot,y))$ evaluated at $x$, and moreover
  \[
    \partial_x u(x,y) = \partial u(\phi(x,y,\cdot))^\sharp(0).
  \]
  In other words, the hedging strategy is given by $\xi(x,y)=\partial_x u(x,y)$, which corresponds to the option's Delta in the language of finance.
\end{remark}

Certain classical martingale inequalities hold also for submartingales. This can be related to the above as follows (the submartingale property is understood componentwise in the multivariate case).

\begin{remark}
  Let $\X=\R^n$ and $g: \X\to\overline{\R}$; then by Lemma~\ref{le:SupIsConcaveEnvelope}, we have $\sup_{\mu\in\cM(x)} \mu[g]=g^\sharp(x)$. Now let $\cM^*(x)$ be the set of all probability measures on $\X$ having finite support and barycenter $x^*\geq x$. If the function $g$ is (componentwise) nonincreasing, we also have
  \[
    \sup_{\mu\in\cM^*(x)} \mu[g]=g^\sharp(x),\quad x\in\X.
  \]
  Indeed, for each $\mu^*\in\cM^*(x)$ there is $\mu\in\cM(x)$ such that
  $\mu^*[g]\leq \mu[g]$. As a consequence, the martingale inequality corresponding to $f$ and $\phi$ extends to submartingales under the condition that
  \[
    A^\infty f(\phi(z,\cdot))\quad\text{is nonincreasing.}
  \]
  Some martingale inequalities extend only to, e.g., nonnegative submartingales. Such a case can be covered by choosing a suitable state space $\Z$, as in Section~\ref{se:subordinate} below.
\end{remark}

We conclude this section with a brief remark about measurability questions (which we have avoided wherever possible).

\begin{remark}
  Suppose that $\X=\R^n$ and $\Z$ is, say, a Polish space, and that $\phi$ is Borel-measurable. If $f$ is Borel-measurable, one can check that $Af$ and $A^\infty f$ are upper-semianalytic and in particular universally measurable; however, it can happen that $Af$ is not Borel-measurable. As a consequence, the hedging strategy in Remark~\ref{rk:mathFin} can also be chosen to be universally measurable.
\end{remark}

\section{Examples}\label{se:examples}

\subsection{Doob's Maximal Inequality}\label{se:DoobIneq}

The aim of this subsection is to illustrate the above abstract theory by a ramification of Doob's maximal $L^p$-inequality; in this case, all quantities of interest can be computed explicitly. In what follows, $\X$ is a vector space with norm~$|\cdot|$.

\begin{proposition}\label{pr:Doob}
Let $1<p<\infty$, $\bZ=\{(x,y)\in \bX\times\R_+:|x|\leq y\}$ and
\[
  \phi(x,y,d)=(x+d, y\vee|x+d|), \quad f(x,y)=y^p- (\tfrac p{p-1})^p|x|^p, \quad (x,y,d)\in \bZ\times\bX.
\]
Then the minimal fixed point of $A$ dominating $f$ is given by
\begin{align}\label{DoobFixPoint}
  A^\infty f(x,y)=\begin{cases}
  f(x,y) & \text{if } |x| < \tfrac{p-1}p y,\\
  \tilde u(x,y) & \text{if } |x| \geq \tfrac{p-1}p y,
\end{cases}
\end{align}
where
\[
  \tilde u(x,y):= py^p- \tfrac{p^2}{p-1}|x|y^{p-1}, \quad (x,y)\in \bZ.
\]
\end{proposition}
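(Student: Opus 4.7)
The proof has three steps, one for each defining property of the minimal fixed point (Theorem~\ref{th:minimalFP}): (a) $u \geq f$; (b) $Au = u$; (c) minimality of $u$ among such fixed points. Steps (a) and (b) are direct verifications; step (c) is the substantive part.

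For (a), the equality $u = f$ is definitional on the inner region $\{|x| < (p-1)y/p\}$, and on the outer region homogeneity of degree $p$ reduces $\tilde u \geq f$ to the one-variable claim
\[
h(t) := (p-1) - \tfrac{p^2}{p-1}\, t + \bigl(\tfrac{p}{p-1}\bigr)^p t^p \geq 0, \qquad t \in [\tfrac{p-1}{p}, 1].
\]
Since $h(\tfrac{p-1}{p}) = h'(\tfrac{p-1}{p}) = 0$ (reflecting the envelope tangency at the interface) and $h'' \geq 0$, one has $h \geq 0$ throughout.

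For (b), Lemma~\ref{le:Amonotone} already gives $Au \geq u$, so it suffices to show that $d \mapsto u(\phi(x,y,d)) = u(x+d,\, y \vee |x+d|)$ is itself concave in $d$: then its concave envelope coincides with itself and its value at $d = 0$ is just $u(x, y)$. Setting $t = x + d$, note that $u(t,\, y \vee |t|)$ depends on $t$ only through $r = |t|$, so the map factors as $r \mapsto G_y(r)$ with
\[
G_y(r) := \begin{cases} y^p - (p/(p-1))^p\, r^p, & 0 \leq r \leq (p-1)y/p,\\ p y^p - (p^2/(p-1))\, r\, y^{p-1}, & (p-1)y/p \leq r \leq y,\\ -p\, r^p/(p-1), & r \geq y.\end{cases}
\]
Each piece is concave (affine in the middle); a direct computation shows the one-sided slopes at both joins $r = (p-1)y/p$ and $r = y$ equal $-p^2 y^{p-1}/(p-1)$, so $G_y$ is $C^1$ and globally concave on $[0, \infty)$, and visibly non-increasing. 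Hence $t \mapsto G_y(|t|)$ is concave (concave non-increasing composed with the convex norm), and so is $d \mapsto G_y(|x+d|)$.

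For (c), (a) and (b) already give $A^\infty f \leq u$ via Theorem~\ref{th:minimalFP}. For the reverse, let $v$ be any fixed point with $f \leq v \leq u$; I would argue $v = u$. The inner region is handled by the squeeze $f = u$. Fix $y > 0$ and $x \in ((p-1)y/p, y)$; by Remark~\ref{rk:mathFin}, $v(\cdot, y)$ is concave, and since it touches $f$ from above at $x = (p-1)y/p$, a standard tangency argument pins the one-sided slope at that point to $-p^2 y^{p-1}/(p-1)$. The two-point martingale at $x$ with support $\{(p-1)y/p,\, y\}$ then yields the secant bound
\[
v(x, y) \geq \frac{px - (p-1)y}{y}\, v(y, y),
\]
since $v((p-1)y/p, y) = 0$, so everything reduces to showing $v(y, y) = -py^p/(p-1)$. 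By homogeneity (Remark~\ref{rk:scaling}), $v(y, y) = y^p \gamma$ for a single constant $\gamma \leq -p/(p-1)$; I would upgrade this to equality by combining the pinned slope condition with the fixed-point identity $v(y, y) = (v \circ \phi(y, y, \cdot))^\sharp(0)$ to exclude $\gamma < -p/(p-1)$ (for strictly smaller $\gamma$ the only candidate tangent at $d = 0$ has slope $p y^{p-1}\gamma$ and would fail to dominate $v \circ \phi(y, y, \cdot)$ on the piece where that map re-enters the inner region). Once $\gamma$ is pinned, the secant bound gives $v(x, y) \geq \tilde u(x, y) = u(x, y)$ on the outer region, so $v = u$ throughout. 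Excluding strictly smaller $\gamma$ is the main obstacle; an alternative is to invoke Remark~\ref{rk:minimalFPprob} and construct a sequence of simple martingales whose $f$-expectations approach $u(x_0, y_0)$, amounting to discretizing an extremal continuous-time martingale for Doob's $L^p$-inequality.
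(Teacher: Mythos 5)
Steps (a) and (b) are correct and complete: the interface computations ($h$ has a double zero at $\tfrac{p-1}{p}$ and is convex; the three pieces of $G_y$ are concave with matching slope $-\tfrac{p^2}{p-1}y^{p-1}$ at both joins) do show that $u$ is a fixed point dominating $f$, whence $A^\infty f\le u$. This is a legitimate, more computational substitute for the paper's remark that a concave profile satisfying its fit condition defines a fixed point.

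Step (c) contains a genuine error at the decisive moment. Your reduction is fine up to the point where you must exclude $\gamma<-\tfrac{p}{p-1}$, and the forced slope $m=p\gamma y^{p-1}$ of the supporting line $\ell$ at $d=0$ is correct (it is squeezed between the constraint from $r\ge y$, where $v=\gamma r^p/y^{p-(p)}\cdot$\,\ldots\,$=\gamma (r/y)^p y^p$, and the constraint $\ell(\tfrac{p-1}{p}y)\ge 0$). But your claimed contradiction points the wrong way: on the piece where $\phi(z,\cdot)$ re-enters the inner region, i.e.\ $r=|x+d|<\tfrac{p-1}{p}y$, one has $\ell(r)=\gamma y^{p-1}\bigl(pr-(p-1)y\bigr)=|\gamma|\,y^{p-1}\bigl((p-1)y-pr\bigr)$, which \emph{increases} as $\gamma$ decreases; a check at $r=0$ and at $r\uparrow\tfrac{p-1}{p}y$ shows it dominates $f(x_r,y)=y^p-(\tfrac{p}{p-1})^p r^p$ there all the more comfortably when $\gamma<-\tfrac{p}{p-1}$. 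The actual obstruction lies just to the \emph{right} of the interface, in the outer region: $\ell$ and $r\mapsto f(x_r,y)$ both vanish at $r_*=\tfrac{p-1}{p}y$, and for $\gamma<-\tfrac{p}{p-1}$ the line falls off with slope $p\gamma y^{p-1}<-\tfrac{p^2}{p-1}y^{p-1}=\partial_r f(x_{r_*},y)$, hence drops strictly below $f\le v\circ\phi(z,\cdot)$ on an interval $(r_*,r_*+\delta)$, contradicting $\ell\ge v\circ\phi(z,\cdot)$. With the failure relocated there the argument does close (modulo justifying the existence of the supporting line, which is cleanest via one-sided derivatives of the concave radial profile on $[0,1]$, as in the paper). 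Note that the paper avoids this tangent analysis altogether: it squeezes $u=A^\infty f$ against a second fixed point built from $u$'s own boundary tangent shifted to touch $f$, and pins the tangent via the inequality $r_0\le r_1\le 1-1/p$ between the zeros of $f(\cdot,1)$ and of the tangent. Your fallback of constructing near-optimal simple martingales would also work but is not carried out.
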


\begin{remark}\label{rk:DoobWithC}
  The proof below also shows that the constant $(\tfrac p{p-1})^p$ in the definition of $f$ is optimal. Namely, if
  \begin{equation}\label{eq:DoobWithC}
    f_c(x,y)=y^p- c|x|^p
  \end{equation}
  for $c\geq0$, we shall see that $A^\infty f_c\equiv \infty$ for $c<(\tfrac p{p-1})^p$, whereas $A^\infty f_c$ is finite-valued for $c\geq (\tfrac p{p-1})^p$.
\end{remark}

Setting $|M|^*_T= \max_{0\leq t\leq T} |M_t|$ and applying the results of the previous subsection, we immediately deduce the following ramification of Doob's maximal $L^p$-inequality.

\begin{corollary}\label{co:doob}
  For all $(x,y)\in \Z$, $T\geq0$ and every (simple) $\X$-valued martingale $M$ starting at $M_0=x$, we have
  \[
    E\big[(|M|^*_T)^p\vee y^p
     - (\tfrac{p}{p-1})^p|M_T|^p\big]\leq
   \begin{cases}
      y^p- (\tfrac p{p-1})^p|x|^p & \text{if } |x| < \tfrac{p-1}p y,\\
      py^p-\tfrac{p^2}{p-1}|x|y^{p-1} & \text{if } |x| \geq \tfrac{p-1}p y
   \end{cases}
  \]
  and the right-hand side is optimal.
  In particular, for the case $y=|x|$, we have
  \begin{equation}\label{eq:DoobDiagonal}
    E\big[(|M|^*_T)^p - (\tfrac{p}{p-1})^p|M_T|^p\big]\leq -\tfrac{p}{p-1}|x|^p \leq 0
  \end{equation}
  and thus $\||M|^*_T\|_p \leq \tfrac{p}{p-1}\|M_T\|_p$.
\end{corollary}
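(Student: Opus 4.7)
The plan is to view the corollary as the probabilistic translation of the explicit fixed point identified in Proposition~\ref{pr:Doob}. I would take $\Z=\{(x,y)\in\bX\times\R_+:|x|\leq y\}$, $\phi(x,y,d)=(x+d,\,y\vee|x+d|)$ and $f(x,y)=y^p-(\tfrac{p}{p-1})^p|x|^p$ as in Proposition~\ref{pr:Doob}. For any simple martingale $M$ starting at $M_0=x$ and any admissible initial state $z_0=(x,y)\in\Z$, a short induction from the definition of $\phi$ shows that $Z_t=(M_t,\,y\vee|M|^*_t)$ for $0\leq t\leq T$, so that $f(Z_T)=(y\vee|M|^*_T)^p-(\tfrac{p}{p-1})^p|M_T|^p$ is precisely the integrand appearing in the corollary.

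Next, Lemma~\ref{le:dualityMarkov} yields $E[f(Z_T)]\leq A^Tf(x,y)$; since $A^\infty f$ is, by Theorem~\ref{th:minimalFP}, a fixed point of $A$ dominating $f$, the monotonicity in Lemma~\ref{le:Amonotone} also gives $A^Tf\leq A^\infty f$. Substituting the closed-form expression \eqref{DoobFixPoint} for $A^\infty f$ produces the piecewise upper bound displayed in the corollary. Optimality (in the horizon-independent sense) then follows from the minimality assertion in Theorem~\ref{th:minimalFP} together with Remark~\ref{rk:minimalFPprob}, which identifies $A^\infty f(x,y)=\sup_{\mu\in\cM^\infty(x)}\mu[f(Z_\infty)]$; any strictly smaller constant would contradict this identity.

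For the diagonal case $y=|x|$, the inequality $(p-1)/p<1$ immediately places us in the second branch of \eqref{DoobFixPoint}, and a direct computation gives
\[
A^\infty f(x,|x|)=p|x|^p-\tfrac{p^2}{p-1}|x|^p=-\tfrac{p}{p-1}|x|^p,
\]
which is \eqref{eq:DoobDiagonal}. Rearranging yields $E[(|M|^*_T)^p]\leq (\tfrac{p}{p-1})^p E[|M_T|^p]$ and, taking $p$-th roots, the classical Doob bound $\||M|^*_T\|_p\leq\tfrac{p}{p-1}\|M_T\|_p$. The conceptual heart of the argument is the explicit computation of $A^\infty f$ already carried out in Proposition~\ref{pr:Doob}; by comparison the corollary is essentially bookkeeping, the only substantive steps being the induction identifying $Z_t$ and being precise about the sense of optimality guaranteed by Remark~\ref{rk:minimalFPprob}.
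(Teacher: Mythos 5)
Your proposal is correct and follows exactly the route the paper intends: the paper offers no separate proof, deducing the corollary ``immediately'' from Proposition~\ref{pr:Doob} together with Lemma~\ref{le:dualityMarkov}, Theorem~\ref{th:minimalFP} and Remark~\ref{rk:minimalFPprob}, which is precisely the bookkeeping you carry out (including the identification $Z_t=(M_t,\,y\vee|M|^*_t)$ and the diagonal computation $A^\infty f(x,|x|)=-\tfrac{p}{p-1}|x|^p$).
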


%One can easily check that the function $\tilde{u}$ is a fixed point of $A$ (though not the minimal one).
We mention that the function $\tilde{u}$ also appears in a proof of~\eqref{eq:DoobDiagonal} in~\cite{Burkholder.91}. The optimality of the constant was not studied there; incidentally, we see that $\tilde{u}(x,y)$ actually yields the optimal constant for initial conditions with $y=|x|$. The function $\tilde{u}$ can also be extracted (with some additional work) from Cox~\cite{Cox.84}, who considers the finite-horizon version of Doob's inequality in the case $\X=\R$.

\begin{proof}[Proof of Proposition~\ref{pr:Doob} and Remark~\ref{rk:DoobWithC}]
  Fix $c\geq 0$ and let $f:=f_c$ be defined as in~\eqref{eq:DoobWithC}. By Remark~\ref{rk:minimalFPprob}, the function $u:= A^\infty f$ has the representation
  \begin{equation}\label{eq:doobValue}
    u(x,y) = \sup_{\mu\in\cM^\infty(x)} \mu[f(Z_\infty)],
  \end{equation}
  and in view of the form of $f$, this implies that $u(x,y)$ depends on $x$ only through $|x|$. Moreover, we have the scaling property $u(\lambda x, \lambda y) = \lambda^p u(x,y)$ for $\lambda \geq 0$; cf.\ Remark~\ref{rk:scaling}. Thus, $u$ is completely described by the function
  \[
    \varrho: [0,1]\to \overline{\R},\quad \varrho(|x|):=u(x,1);
  \]
  namely, we have $u(0,0)=0$ and $u(x,y)= y^p \varrho(|x| / y) $ for all $(x,y)\in \Z$ with $y>0$.
  On the other hand, we know that $u$ is a fixed point of $A$,
  \begin{equation}\label{DoobFixedPoint}
    u(x,y)=A u(x,y) =u(x+\cdot, y\vee|x+ \cdot|)^\#(0) =u(\cdot, y\vee|\cdot|)^\#(x),
  \end{equation}
  so that $x\mapsto u(x,y)$ is concave. In particular, using $u\geq f$ and the scaling property, we see that $u(x,y)=\infty$ at one point $(x,y)$ if and only if $u\equiv \infty$ on $\Z$. For the time being, let us suppose that we are in the case where $u$ is finite.

  Under this condition, it follows from \eqref{DoobFixedPoint} and the scaling property, or also directly from~\eqref{eq:doobValue}, that $x\mapsto u(x,y)$ is continuous. Thus, $\varrho$ is a continuous concave function on $[0,1]$, and it follows from \eqref{DoobFixedPoint} that its (left) tangent~$t$ at the boundary point $r=1$ satisfies
  \begin{align}\label{FitCondition}
    t(r)\geq r^p\varrho(1),\quad r\in [1,\infty);
  \end{align}
  note that $r^p\varrho(1)=u(x_r,|x_r|)$ if $x_r\in\X$ is any point with $|x_r|=r$ (we may assume that $\X\neq\{0\}$). For later use, we remark that the converse is also true: a continuous concave function $\bar{\varrho}$ on $[0,1]$ satisfying the analogue of \eqref{FitCondition} determines a fixed point $\bar{u}$ of $A$.

  Let us establish that
  \begin{equation}\label{eq:doobSigns}
    \varrho(0)\geq1\quad\mbox{and}\quad\varrho(1)<0.
  \end{equation}
  Indeed, $\varrho(0)=u(0,1)\geq f(0,1)=1$. Moreover, if $\varrho(1)$ were nonnegative, then $p>1$ and~\eqref{FitCondition} would imply that the tangent $t$ has nonnegative slope, thus $\varrho(1)=t(1)\geq t(0) \geq \varrho(0) \geq1$. But then \eqref{FitCondition} states that the affine function $t(r)$ dominates $r^p$ on $[1,\infty)$, which is impossible.

  As a result, $r\mapsto r^p\varrho(1)$ is concave and we see that the tangent condition~\eqref{FitCondition} can be stated equivalently in differential terms. Namely, if $\varrho'(1)$ denotes the slope of $t$, \eqref{FitCondition} is equivalent to
  \begin{align}\label{FitCondition2}
    0 > \varrho'(1) \geq p \varrho(1).
  \end{align}
  In view of~\eqref{eq:doobSigns}, the tangent $t$ has a unique zero $r_1$ in  $[0,1]$. Using the Intercept Theorem, \eqref{FitCondition2} implies that
  \[
   \frac{1-r_1}{\varrho(1)} = \frac{1}{\varrho(1)-t(0)} = \frac{1}{\varrho'(1)}\leq \frac{1}{p\varrho(1)}
  \]
  and hence
  \begin{align}\label{FitCondition3}
  r_1\leq 1-1/p.
  \end{align}

  Next, we construct another fixed point of $A$ for comparison. Let $\bar t$ be the (uniquely determined) affine function which is parallel to $t$ and touches
  \[
    r \mapsto f(x_r,1), \quad r\in [0,1].
  \]
  We denote by $(r_2,f(x_{r_2},1))$ the coordinates of this touching point. Set
  \begin{align}\label{BetterAlpha}\bar\varrho(r):= \begin{cases}
  f(x_r,1) & \mbox{for } r\in [0, r_2],\\
  \bar t(r) & \mbox{for } r\in (r_2,1].
  \end{cases}
  \end{align}
  By definition, $\bar \varrho$ is a continuous concave function satisfying \eqref{FitCondition2}. As remarked above, this implies that $\bar\varrho$ defines a fixed point $\bar{u}$ of $A$ via $\bar u(0,0):=0$ and $\bar u(x,y):= y^p \bar\varrho(|x| / y) $ for $(x,y) \in \bZ$ with $y>0$.

  The fact that $f\leq u$ and the construction of $\bar{u}$ imply that $\bar{u} \leq u$. On the other hand, we have $\bar u \geq f$ and $u$ is the minimal fixed point of $A$ above $f$, so $u\leq \bar{u}$. As a result, $\bar u=u$, $\bar \varrho= \varrho$ and $\bar t= t$. In particular, this establishes that $\varrho$ is of the specific form \eqref{BetterAlpha}; it remains to determine the tangent $t$ explicitly.
  %This is achieved by choosing the coordinate $r_2$ of the touching point in an optimal way; i.e., as large as possible in $[0,1]$ subject to~\eqref{FitCondition3}.

  Consider $r_0:=(1/c)^{1/p}$, the zero  of $r\mapsto 1-cr^p=f(x_r,1)$. By concavity, we must have $r_0\leq r_1$; recall that $r_1$ is the zero of the tangent. In view of~\eqref{FitCondition3}, we conclude that
  \begin{equation}\label{eq:doobTouch}
    r_0\leq r_1\leq  1-1/p;
  \end{equation}
  hence, our assumption that $u$ is finite is contradicted whenever $c<(\tfrac p{p-1})^p$.

  Suppose that $c=(\tfrac p{p-1})^p$. Then $r_0 = 1-1/p$ and so~\eqref{eq:doobTouch} implies that $r_1=r_0=1-1/p$. The slope of $r\mapsto f(x_r,1)$ in this point is $-\frac{p^2}{p-1}$; therefore,
  \[
    t(r)= - r\frac{p^2}{p-1}+p.
  \]
  In view of~\eqref{BetterAlpha}, this corresponds to the claimed formula~\eqref{DoobFixPoint}. Since we have seen that this form of $u$ defines a fixed point dominating $f$, we are necessarily in the case where $A^\infty f$ is finite; moreover, as $f$ is decreasing with respect to~$c$, $A^\infty f$ is then also finite for all $c\geq (\tfrac p{p-1})^p$.
\end{proof}

\subsection{Differentially Subordinate Martingales}\label{se:subordinate}

The main purpose of this subsection is to illustrate how one can accommodate a martingale inequality which holds only for a specific class of martingales. To this end, we shall treat an inequality for differentially subordinate martingales, first derived by Burkholder for real-valued processes in~\cite{Burkholder.84} and extended to the Hilbert-valued case in~\cite{Burkholder.88}. A martingale $N$ is differentially subordinate to another martingale $M$ if $|N_{t+1}-N_t|\leq |M_{t+1}-M_t|$ for all $t\geq0$. In other words, this says that the increments of the bivariate martingale $(M,N)$ take values in the cone $\{(d_1,d_2):\, |d_2|\leq |d_1|\}$, and this is the condition defining the class of (bivariate) martingales for which the inequality will hold.

Let $\bH$ be a Hilbert space. In what follows, our basic vector space is $\X:=\bH\times \bH$ and our state space is $\Z=\X \cup\{\Delta\}$; the additional point $\Delta$ will be used as a cemetery state for paths that violate the subordination condition.

\begin{proposition}\label{pr:subordinateIneq}
  Let $1<p<\infty$ and $p^*=\max\{p,p/(p-1)\}$. For $z\in \Z$ and $d=(d_1,d_2)\in \X$, define
  \[
    \phi(z,d)=
    \begin{cases}
      \Delta & \text{if } z=\Delta \text{ or }|d_2|>|d_1|, \\
      z+d & \text{otherwise,}
    \end{cases}
  \]
  \[
    f(z)=
    \begin{cases}
      -\infty & \text{if } z=\Delta, \\
      |x_2|^p-(p^*-1)^p|x_1|^p & \text{if } z=(x_1,x_2)\in \X,
    \end{cases}
  \]
  \[
    \tilde{u}(z)=
    \begin{cases}
      -\infty & \text{if } z=\Delta, \\
      p(1-1/p^*)^{p-1} (|x_2|-(p^*-1)|x_1|) (|x_1|+|x_2|)^{p-1} & \text{if } z\in \X.
    \end{cases}
  \]
  Then the minimal fixed point of $A$ dominating $f$ is given by $A^\infty f=u$, where $u$ is defined for $1<p\leq 2$ by
  \[
    u(z)=
    \begin{cases}
      -\infty & \text{if } z=\Delta, \\
      \tilde{u}(z) & \text{if } z=(x_1,x_2)\in \X \text{ and } |x_2|\leq (p^*-1)|x_1|,\\
      f(z) & \text{if } z=(x_1,x_2)\in \X \text{ and } |x_2|> (p^*-1)|x_1|\\
    \end{cases}
  \]
  and by the same identity with $\tilde{u}$ and $f$ interchanged if $2\leq p<\infty$.
\end{proposition}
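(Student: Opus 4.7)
The plan is to verify the two conditions characterizing the minimal fixed point in Theorem~\ref{th:minimalFP}, namely $u\geq f$ and $Au=u$, and then to establish minimality via the probabilistic representation of Remark~\ref{rk:minimalFPprob}. The function $u$ is closely related to the Burkholder majorant from \cite{Burkholder.84,Burkholder.88}, so the bulk of the analytic work is already present in the literature; our task is mainly to recast it in the fixed-point language of Section~\ref{se:timehom}.

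First I would verify that $u$ is well-defined at the interface $|x_2|=(p^*-1)|x_1|$: on that surface the factor $|x_2|-(p^*-1)|x_1|$ in $\tilde u$ vanishes, and $f=((p^*-1)|x_1|)^p-(p^*-1)^p|x_1|^p=0$ as well, so the two pieces agree. The bound $u\geq f$ is trivial on the $f$-piece, while on the $\tilde u$-piece it reduces, by homogeneity of degree $p$ in $(|x_1|,|x_2|)$, to a one-variable inequality on $[0,1]$ that is standard in Burkholder's original computation.

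For the fixed-point identity I would apply Lemma~\ref{le:FPcharacterizations}(i): for each $z\in\Z$ I must exhibit $\xi(z)\in\bH\times\bH$ such that $u(\phi(z,d))\leq u(z)+\br{\xi(z),d}$ for every $d\in\bH\times\bH$. At $z=\Delta$ the inequality is vacuous, since $\phi(\Delta,d)=\Delta$ and $u(\Delta)=-\infty$. At $z=(x_1,x_2)\in\bH\times\bH$ the left-hand side is $-\infty$ whenever $|d_2|>|d_1|$, so it suffices to exhibit a linear majorant $\br{\xi(z),d}$ for $u(z+d)-u(z)$ on the subordination cone $\{|d_2|\leq|d_1|\}$. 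This is precisely the content of Burkholder's construction: the function $u$ is designed so that at every $z$ there is a choice of supergradient $\xi(z)=\nabla u(z)$ (with appropriate one-sided selections at the interface and at the axes $x_1=0$, $x_2=0$) which bounds $u(z+\cdot)$ from above in all subordinate directions. Combined with Lemma~\ref{le:Amonotone}, this yields $Au=u$.

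For minimality, Theorem~\ref{th:minimalFP} already gives $A^\infty f\leq u$. Where $u=f$, monotonicity of $A$ forces $A^\infty f\geq f=u$, so equality holds there. On the complementary region (where $u=\tilde u$) I would invoke Remark~\ref{rk:minimalFPprob}: any increment $d$ with $|d_2|>|d_1|$ sends $Z$ to $\Delta$ and contributes $-\infty$ to $\mu[f(Z_\infty)]$, so any maximizing sequence in $\cM^\infty(x_0)$ must correspond to a differentially subordinate martingale pair $(M,N)$. The sharpness half of Burkholder's $L^p$ inequality supplies, for each initial value in this region, explicit simple two-point differentially subordinate martingales for which $E[|N_\infty|^p-(p^*-1)^p|M_\infty|^p]$ approaches $\tilde u(x_1,x_2)$; substituting these laws into the representation $A^\infty f(z_0)=\sup_\mu\mu[f(Z_\infty)]$ yields the remaining inequality $A^\infty f\geq u$. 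The main obstacle throughout is the pointwise supergradient estimate, because $u$ is only piecewise smooth, its form depends on whether $p\leq 2$ or $p\geq 2$, and the supergradient must be chosen carefully along the interface and at the coordinate axes; rather than redo the case analysis, I would cite \cite{Burkholder.88} and Os{\c{e}}kowski~\cite{Osekowski.12} for the detailed verification and only check that its output has exactly the form demanded by Lemma~\ref{le:FPcharacterizations}(i).
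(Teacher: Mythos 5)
Your outline is correct and, like the paper, delegates the hard analysis to \cite{Burkholder.84,Burkholder.88}, but it diverges from the paper's proof in both of its nontrivial steps. For the fixed-point identity, the paper does not go through the supergradient condition of Lemma~\ref{le:FPcharacterizations}(i); it instead quotes Remark~1.2 of \cite{Burkholder.88}, which states that $u$ is the \emph{smallest} majorant of $f$ on $\X$ that is concave along every line $r\mapsto z+rd$ with $|d_2|\leq|d_1|$, and observes that this diagonal concavity (together with $u(\phi(z,\cdot))=-\infty$ off the cone) forces $u(\phi(z,\cdot))^\sharp(0)=u(z)$. Your route via the pointwise inequality $u(\phi(z,d))\leq u(z)+\br{\xi(z),d}$ is a legitimate alternative and in some ways more robust---the cone $\{|d_2|\leq|d_1|\}$ is not convex, so controlling the concave envelope at $0$ really does require a linear majorant or a reduction to collinear points---at the price of the piecewise case analysis for $\xi(z)$, which you reasonably outsource. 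The more substantial difference is in the minimality step: the paper needs no extremal martingales at all. It notes that any fixed point $g\geq f$ of $A$ satisfies $g(z+d)=g(\phi(z+d,\cdot))^\sharp(0)$ and is therefore itself diagonally concave, whence Burkholder's least-majorant characterization gives $g\geq u$ directly.

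Your probabilistic minimality argument via Remark~\ref{rk:minimalFPprob} can be made to work, but the input you cite is not quite the right one. The ``sharpness half'' of Burkholder's $L^p$ inequality asserts optimality of the constant $(p^*-1)$, i.e.\ it concerns particular starting points; you need, for \emph{every} $(x_1,x_2)$ in the $\tilde u$-region, simple differentially subordinate pairs started there whose payoff approaches $\tilde u(x_1,x_2)$, and these are multi-step zigzag constructions rather than single two-point martingales. The pointwise statement you require is exactly the least-biconcave-majorant property of Remark~1.2 in \cite{Burkholder.88} (equivalent to your claim through Lemma~\ref{le:dualityMarkov}); once you invoke that, the paper's shorter dual argument is available and the extremal-martingale detour becomes unnecessary.
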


\begin{corollary}
  Let $1<p<\infty$ and $p^*=\max\{p,p/(p-1)\}$. Let $M^1,M^2$ be $\bH$-valued (simple) martingales starting at $(M^1_0,M^2_0)=(x_1,x_2)$ and satisfying $|M^2_{t+1}- M^2_t| \leq |M^1_{t+1}- M^1_t|$ for all $t\geq0$. Then
  \[
    E[|M^2_T|^p - (p^*-1)^p |M^1_T|^p] \leq u(x_1,x_2)
  \]
  and in particular $\|M^2_T\|_p \leq (p^*-1)\|M^1_T\|_p$ if $x_1=x_2$.
\end{corollary}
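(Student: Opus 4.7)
The plan is to reduce the claim to Proposition~\ref{pr:subordinateIneq} via the dynamic-programming duality developed in Section~\ref{se:timehom}. I would regard $M=(M^1,M^2)$ as an $\X$-valued simple martingale for $\X=\bH\times\bH$, starting at $x_0:=(x_1,x_2)$, and form the $\Z$-valued process $Z$ associated to the update map $\phi$ of Proposition~\ref{pr:subordinateIneq} with $z_0:=x_0\in\X\subset\Z$. The key observation is that the subordination hypothesis $|M^2_{t+1}-M^2_t|\leq|M^1_{t+1}-M^1_t|$ says exactly that every increment $d=(d_1,d_2)$ of $M$ lies in the cone $\{|d_2|\leq|d_1|\}$, so $\phi$ never sends $Z$ to the cemetery $\Delta$. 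Consequently $Z_t=(M^1_t,M^2_t)$ pathwise for every $t\leq T$, and $f(Z_T)=|M^2_T|^p-(p^*-1)^p|M^1_T|^p$ identically.

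With $\mu$ denoting the law of $(M^1,M^2)$, Lemma~\ref{le:dualityMarkov} (or directly Remark~\ref{rk:minimalFPprob}) then yields
\[
  E[f(Z_T)]=\mu[f(Z_T)]\leq\sup_{\nu\in\cM^T(x_0)}\nu[f(Z_T)]=A^Tf(z_0)\leq A^\infty f(z_0)=u(x_1,x_2),
\]
where the last inequality uses $A^Tf\leq A^\infty f$ (from iterating Lemma~\ref{le:Amonotone}(i), equivalently Theorem~\ref{th:minimalFP}) and the final equality is the content of Proposition~\ref{pr:subordinateIneq}. This establishes the main inequality of the corollary.

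For the norm bound in the special case $x_1=x_2$, I would verify that $u(x_1,x_1)\leq 0$ by a short sign computation. Since $p^*\geq 2$ for every $1<p<\infty$, the diagonal point $(x_1,x_1)$ satisfies $|x_2|=|x_1|\leq(p^*-1)|x_1|$, so it lies in the region where $u=\tilde u$ when $1<p\leq 2$ and in the region where $u=f$ when $2\leq p<\infty$. In the first case, $\tilde u(x_1,x_1)$ carries the nonpositive factor $(2-p^*)$; in the second, $f(x_1,x_1)=(1-(p^*-1)^p)|x_1|^p\leq 0$. Either way $E[|M^2_T|^p-(p^*-1)^p|M^1_T|^p]\leq 0$, and rearranging gives $\|M^2_T\|_p\leq(p^*-1)\|M^1_T\|_p$.

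The genuine difficulty is already absorbed in Proposition~\ref{pr:subordinateIneq}---once the explicit fixed point $u$ is in hand, the remaining steps here are essentially bookkeeping: confirming that the subordination assumption keeps $Z$ inside $\X$, invoking the duality of Section~\ref{se:timehom}, and performing an elementary sign check on the diagonal.
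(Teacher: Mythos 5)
Your proof is correct and follows exactly the derivation the paper intends (the corollary is stated without proof as an immediate consequence of Proposition~\ref{pr:subordinateIneq} together with the duality of Lemma~\ref{le:dualityMarkov}/Remark~\ref{rk:minimalFPprob}): the subordination hypothesis keeps $Z$ off the cemetery state, the duality gives $E[f(Z_T)]\leq A^\infty f(z_0)=u(x_1,x_2)$, and your sign check of $u$ on the diagonal (using $p^*\geq 2$ in both regimes) is accurate.
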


\begin{proof}[Proof of Proposition~\ref{pr:subordinateIneq}]
  All relevant properties are contained in~\cite{Burkholder.88}; we merely translate them into our setup. Indeed, we have $f(\Delta)=u(\Delta)$ by definition, and it is checked below Equation~(1.10) in~\cite{Burkholder.88} that $f(z)\leq \tilde{u}(z)$ for $z\in\X$. Hence, $f\leq u$.
  Moreover, according to Remark~1.2 in~\cite{Burkholder.88}, $u$ is the smallest function which dominates $f$ on $\X$ and has the property that $r\mapsto u(z+rd)$ is concave for all $z\in\X$ and all $d=(d_1,d_2)\in\X$ such that $|d_2|\leq|d_1|$. Using our notation and recalling that $u(\phi(z,\cdot))=-\infty$ outside the set $\{|d_2|\leq|d_1|\}$, it follows that $u$ is the smallest function dominating $f$ on $\Z$ such that $u(\phi(z,\cdot))$ is concave on $\X$. The latter property implies that
  \[
    Au(z)=u(\phi(z,\cdot))^\sharp (0) = u(\phi(z,0))=u(z),
  \]
  so $u$ is a fixed point of $A$. Conversely, if $g: \Z\to\overline{\R}$ is any fixed point of $A$, then $g(\phi(z,d))=g(z+d+\cdot)^\sharp (0)$ and hence $g(\phi(z,\cdot)$ is concave. As a result, $u$ is the smallest fixed point of $A$ dominating $u$.
\end{proof}

\section{Tchakaloff's Theorem for Martingales}\label{se:tchakaloff}

In the preceding sections, we have restricted our attention to simple martingales and we still have to argue that this entails no essential loss of generality. On the one hand, let us mention again that for nice functions $f$ and $\phi$, the extension from simple to general martingales can be done by direct approximation arguments; see, e.g., the proofs of Lemma~2.2 in~\cite{Burkholder.02} or Theorem~2.2 in~\cite{Burkholder.81}. On the other hand, we have developed the theory without regularity conditions and so we would like to see that the extension can be achieved under the natural requirement necessary to define the expectations; namely, the measurability alone. This will be achieved by a martingale version of Tchakaloff's theorem.

Following Bayer and Teichmann~\cite{BayerTeichmann.06}, a general version of Tchakaloff's classical theorem~\cite{Tchakaloff.57} about the existence of cubature formulas can be stated as follows: given an integrable function $f$ on a probability space $(\Omega,\cF,\mu)$, there exists a probability measure $\nu$ with finite support such that $\nu[f]=\mu[f]$, and moreover that support can be chosen to lie in the support of $\mu$. The function $f$ may be multivariate, which allows one to incorporate a finite number of linear constraints on $\nu$; for instance, that $\nu$ should have the same first moment as $\mu$. Our aim is to provide a version of the theorem where $\mu$ and $\nu$ are martingale laws. This extension is not immediate because the martingale property corresponds to an infinite number of constraints\footnote{We thank Josef Teichmann for the insightful discussions which led to this theorem.}.

%Let $k,n,T\in \N$ and $\Omega=(\R^n)^T$; we see $\Omega$ as the space of $\R^n$-valued paths in time $\{0,1,\dots,T\}$, starting at the origin. Given a measure $\mu$ on $\Omega$, we denote by $\cF^\mu$ the $\mu$-completion of the Borel $\sigma$-field $\cB(\Omega)$.

\begin{theorem}\label{th:tchakaloff}
  Let $k,n,T\in \N$ and $\X=\R^n$. Let $x_0\in\X$ and let $\mu$ be the law of an $\X$-valued martingale $M_0,\dots, M_T$ with $M_0=x_0$, and let $A\subseteq \X^{T+1}$ be a ($\mu$-measurable) set such that $\mu(A)=1$. Moreover, let $f: \X^{T+1}\to \R^k$ be a $\mu$-measurable function such that $\mu[|f|]<\infty$. There exists a martingale law $\nu$, still starting at $x_0$, such that $\#\supp \nu\leq (n+k+1)^T$, $\supp \nu\subseteq A$ and
  \[
    \nu[f]=\mu[f].
  \]
\end{theorem}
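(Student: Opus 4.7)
The plan is to prove this by induction on $T$, peeling off the first time step at each stage and applying the classical Bayer--Teichmann version of Tchakaloff's theorem. The base case $T=0$ is trivial: take $\nu:=\delta_{x_0}$. For the inductive step, assume the result for horizon $T-1$ with any $n$, $k$.

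First, disintegrate $\mu=\mu_1\otimes K$, where $\mu_1$ is the joint law of $(M_0,M_1)$ (concentrated on $\{x_0\}\times \X$) and $K(x_0,x_1;\cdot)$ is a regular conditional kernel for $(M_2,\ldots,M_T)$ given $(M_0,M_1)=(x_0,x_1)$; this exists because $\X=\R^n$ is Polish. Define
\[
  \tilde f(x_0,x_1):=\int f(x_0,x_1,y_2,\ldots,y_T)\,K(x_0,x_1;dy_2\cdots dy_T),
\]
which is well-defined $\mu_1$-a.e.\ with $\mu_1[|\tilde f|]\leq \mu[|f|]<\infty$ by Fubini. Let $A_1\subseteq \X^2$ be the full-measure set on which, in addition: (a) $K(x_0,x_1;\cdot)$ is a martingale law starting at $x_1$; (b) $f(x_0,x_1,\cdot)$ is $K(x_0,x_1;\cdot)$-integrable with conditional expectation equal to $\tilde f(x_0,x_1)$; and (c) the slice $A_{x_0,x_1}:=\{(y_2,\ldots,y_T):(x_0,x_1,y_2,\ldots,y_T)\in A\}$ satisfies $K(x_0,x_1;A_{x_0,x_1})=1$.

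Next, I would apply the classical Tchakaloff theorem to $\mu_1$ viewed as a probability measure concentrated on $A_1$, with the $\R^{n+k}$-valued test function $(x_0,x_1)\mapsto (x_1,\tilde f(x_0,x_1))$. This yields $\nu_1=\sum_{i=1}^N p_i\, \delta_{(x_0,x_1^{(i)})}$ with $N\leq n+k+1$, all atoms in $A_1$, $\sum_i p_i x_1^{(i)}=x_0$ (so the time-$1$ martingale property is preserved), and $\nu_1[\tilde f]=\mu[f]$. For each $i$, apply the inductive hypothesis to the martingale law $K(x_0,x_1^{(i)};\cdot)$ (horizon $T-1$, starting at $x_1^{(i)}$), to the integrable function $\bar f_i(y_2,\ldots,y_T):=f(x_0,x_1^{(i)},y_2,\ldots,y_T)$, and to the full-measure set $A_{x_0,x_1^{(i)}}$. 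This produces $\nu^{(i)}$ on $\X^T$ starting at $x_1^{(i)}$, with $\#\supp \nu^{(i)}\leq (n+k+1)^{T-1}$, $\supp \nu^{(i)}\subseteq \{x_1^{(i)}\}\times A_{x_0,x_1^{(i)}}$, and $\nu^{(i)}[\bar f_i]=\tilde f(x_0,x_1^{(i)})$.

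Setting $\nu:=\sum_i p_i\, \delta_{x_0}\otimes \nu^{(i)}$ closes the induction: $\nu$ is a martingale law starting at $x_0$ (by the barycenter identity at time $1$ and the martingale property of each $\nu^{(i)}$), $\supp \nu\subseteq A$ by construction, $\#\supp \nu\leq (n+k+1)(n+k+1)^{T-1}=(n+k+1)^T$, and
\[
  \nu[f]=\sum_i p_i\, \nu^{(i)}[\bar f_i]=\sum_i p_i\, \tilde f(x_0,x_1^{(i)})=\nu_1[\tilde f]=\mu[f].
\]
The main technical obstacle is the measure-theoretic bookkeeping concentrated in $A_1$: the atoms selected by Tchakaloff must be genuine points at which the regular conditional is a martingale law with the correct conditional integrability, so Tchakaloff must be applied to $\mu_1$ viewed as living on $A_1$ rather than on its raw support. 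This requires the (mild) strengthening of Bayer--Teichmann allowing any prescribed full-measure set in place of $\supp\mu_1$, which follows by restricting the ambient measurable space.
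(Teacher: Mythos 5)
Your proof is correct and follows essentially the same strategy as the paper's: induction on the horizon combined with a disintegration of $\mu$, an application of the Bayer--Teichmann form of Tchakaloff's theorem to the test function augmented by the coordinate map (so that the barycenter, hence the martingale constraint, is preserved), and absorption of the null set where the kernel misbehaves into the prescribed full-measure set. The only difference is cosmetic: you peel off the first time step and apply the inductive hypothesis to the conditional kernels, whereas the paper peels off the last time step and applies the inductive hypothesis to the marginal on $\X^{T}$; both give the bound $(n+k+1)^{T}$.
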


\begin{proof}
  By changing $f$ on a $\mu$-nullset and replacing $A$ with a smaller set of full $\mu$-measure, we may assume that $f$ and $A$ are Borel. The case $T=1$ is now a consequence of Tchakaloff's theorem in the form of \cite[Corollary~2]{BayerTeichmann.06} applied to the function $\phi: \X \to \R^{n+k+1}$ given by $\phi(x)=(f(x), x, 1)$.
  Hence, we assume that the theorem holds for some $T\in\N$ and show how to pass to $T+1$. So let $\mu$ be a martingale law on $\X^{T+1}$ and let $A\subseteq\X^{T+1}$ satisfy $\mu(A)=1$.
  Let $\mu_{0}$ be the marginal of $\mu$ on $\X^T$, given by $\mu_0(B):=\mu(B\times \X)$ for $B\in\cB(\X^T)$, and let $\mu_1$ be a Borel-measurable stochastic kernel from $\X^T$ to $(\X,\cB(\X))$
  such that
  \begin{equation}\label{eq:product}
    \mu=\mu_0\otimes \mu_1.
  \end{equation}
  It is easy to see that $\mu_0$ is a martingale law on $\X^T$ and that $\mu_0(A_0)=1$ if $A_0$ is the (universally measurable) canonical projection of $A$ onto $\X^T$.
  On the other hand, it follows from~\eqref{eq:product} that there exists $N\in\cB(\X^T)$ with $\mu_0(N)=0$ such that for all $x\in \X^T \setminus N$, we have  $\int |f(x,x')| \,\mu_1(x;dx')<\infty$ and
  \[
    \mbox{$\mu_1(x)$ is a martingale law on $\X$ satisfying $\mu_1(x;A_x)=1$,}
  \]
  where $A_x\in \cB(\X)$ is the section $A_x=\{x'\in\X:\, (x,x')\in A\}$.

  By the induction hypothesis, there exists a martingale law $\nu_0$ on $\X^T$ such that
  \begin{equation}\label{eq:Support0}
    \#\supp \nu_0\leq (n+k+1)^T,\quad \supp \nu_0\subseteq A_0\setminus N
  \end{equation}
  and
  \[
    \nu_0[g]=\mu_0[g] \quad \mbox{for}\quad g(x):=\int f(x,x') \,\mu_1(x;dx').
  \]

  Fix $x\in \X^T\setminus N$. By applying the case $T=1$ to the function $f(x,\cdot)$ and the measure $\mu_1(x)$, we obtain a martingale law $\nu_1(x)$ on $\X$ such that $\#\supp \nu_{1}\leq n+k+1$, $\supp \nu_{1} \subseteq A_x$ and
  \[
   \int f(x,x')\, \nu_1(x;dx')=\int f(x,x')\, \mu_1(x;dx')\equiv g(x).
  \]
  We may see $x\mapsto \nu_1(x)$ as a kernel and define
  \[
    \nu=\nu_0\otimes \nu_1;
  \]
  this product is well defined as a consequence\footnote{In particular, the finiteness of $\supp \nu_0$ implies that there are no measurability issues; $\nu$ is simply a finite weighted sum.} of~\eqref{eq:Support0}. By construction, we have $\#\supp \nu\leq (n+k+1)^{T+1}$. Moreover, it follows from Fubini's theorem that
  \begin{multline*}
    \mu[f]= \int \bigg[\int f(x,x') \,\mu_1(x;dx')\bigg]\,\mu_0(dx) = \mu_0[g] = \nu_0[g] \\= \int \bigg[\int f(x,x') \,\nu_1(x;dx')\bigg]\,\nu_0(dx) = \nu[f],
  \end{multline*}
  %Note that as $\supp \nu$ is finite, any real function is $\nu$-integrable and so the application of Fubini's theorem is justified.
  and similarly that $\nu$ is a martingale law satisfying $\nu(A)=1$.
\end{proof}

The preceding theorem entails that even for merely measurable functions~$f$, simple martingales are sufficient to establish martingale inequalities; in particular, this yields an extension of the results from Section~\ref{se:timehom} to general martingales.

\begin{corollary}
  Let $\X=\R^n$ and let $f: \X^{T+1}\to\overline{\R}$ be universally measurable. Then
  \[
    \sup_{\mu\in\cM^T(x_0)} \mu[f] = \sup_M E[f(M_0,\dots, M_T)],
  \]
  where the supremum on the right-hand side is taken over all $n$-dimensional martingales $M_0,\dots,M_T$ with $M_0=x_0$, each on its filtered probability space.
\end{corollary}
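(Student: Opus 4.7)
The inequality $\sup_{\mu \in \cM^T(x_0)} \mu[f] \leq \sup_M E[f(M_0,\dots,M_T)]$ is immediate: any $\mu \in \cM^T(x_0)$ is the law of some $n$-dimensional martingale on its canonical finite probability space, and the expectation depends only on the law. For the reverse inequality, I would fix an arbitrary $n$-dimensional martingale $M$ with $M_0 = x_0$ and law $\mu$ on $\X^{T+1}$, and produce $\nu \in \cM^T(x_0)$ with $\nu[f] \geq \mu[f]$ (or, when $\mu[f] = +\infty$, a sequence whose values diverge).

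The plan is to reduce to the setting of Theorem~\ref{th:tchakaloff}, which requires $f$ to be $\R^k$-valued with $\mu[|f|] < \infty$. I split according to the value of $\mu[f] \in \overline{\R}$. The case $\mu[f] = -\infty$ is trivial. When $\mu[f] \in \R$, one has $\mu[|f|] < \infty$ and $f$ is $\mu$-a.s.\ finite, so I set $A := \{f \in \R\}$, modify $f$ on its $\mu$-nullset of infinite values to obtain a $\mu$-measurable $\R$-valued surrogate $\tilde f$, and apply Theorem~\ref{th:tchakaloff} with $k = 1$ to obtain $\nu \in \cM^T(x_0)$ with $\supp\nu \subseteq A$ and $\nu[\tilde f] = \mu[\tilde f] = \mu[f]$. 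The support condition forces $\nu[f] = \nu[\tilde f] = \mu[f]$. When $\mu[f] = +\infty$, the convention $\infty-\infty = -\infty$ forces $\mu[f^-] < \infty$, hence $\{f = -\infty\}$ is a $\mu$-nullset; for each $c > 0$ the truncation $g_c := f \wedge c$ (modified to, say, $-c$ on that nullset) lies in $L^1(\mu)$, so the finite case applied to $g_c$ yields $\nu_c \in \cM^T(x_0)$ with $\nu_c[g_c] = \mu[g_c]$ and $\supp\nu_c \subseteq \{g_c \leq f\}$. Then $\nu_c[f] \geq \nu_c[g_c] = \mu[g_c]$, and monotone convergence yields $\mu[g_c] \to +\infty$ as $c \to \infty$, so the left-hand side in the corollary is $+\infty$ as well.

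The main obstacle is the $\overline{\R}$-valued nature of $f$, since Theorem~\ref{th:tchakaloff} is stated only for $\R^k$-valued integrable functions. The decisive point enabling the reduction is the support clause $\supp\nu \subseteq A$ in Theorem~\ref{th:tchakaloff}: it is compatible with arbitrary modifications of $f$ on $\mu$-nullsets, including the set where $f$ is infinite, so one can freely pass from $f$ to an $\R$-valued integrable surrogate in the finite case and use monotone truncations in the $+\infty$ case, without ever needing to control $f$ pointwise outside the support of $\nu$.
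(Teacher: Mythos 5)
Your proposal is correct and follows essentially the same route as the paper: the easy inequality is noted, and the hard one is reduced to the real-valued integrable case (via monotone truncation $f\wedge c$ and modification on a $\mu$-nullset, justified by the support clause in Theorem~\ref{th:tchakaloff}) and then settled by the martingale Tchakaloff theorem. Your write-up is merely a more explicit case analysis of the same argument the paper gives in compressed form.
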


\begin{proof}
  It suffices to show that $\sup_{\mu\in\cM^T(x_0)} \mu[f(Z_T)] \geq E[f(M_0,\dots M_T)]$ for any martingale $M$ with $M_0=x_0$. For this, we may assume without loss of generality that $E[f(M_0,\dots M_T)]>-\infty$ and, by monotone convergence, that $f$ is bounded from above. Hence, we may assume that $f$ is real-valued.

  Under these conditions, we have $\mu_M[|f|]<\infty$ for the law $\mu_M$ of $M$. Thus, Theorem~\ref{th:tchakaloff} yields $\mu\in\cM^T(x_0)$ such that $\mu[f]=\mu_M[f]=E[f(M_0,\dots M_T)]$ and the claim follows.
\end{proof}

%%%%%%%%%%%%%%%%%%%%%%%%%%%%%%%%%%%%%%%%%%%%%%%%%%%%%%%%%%%%%%%%

\newcommand{\dummy}[1]{}

%%%%%%%%%%%%%%%%%%%%%%%%%%%%%%%%%%%%%%%%%%%%%%%%%%%%%%%%%%%%%%%%
\end{document}